\newtheoremstyle{break}
  {\topsep}{\topsep}%
  {\itshape}{}%
  {\bfseries}{}%
  {\newline}{}%
\newtheoremstyle{break1}
  {\topsep}{\topsep}%
  {}{}%
  {\bfseries}{}%
  {\newline}{}%
\theoremstyle{break} \newtheorem{theorem}{Theorem}[section]
\theoremstyle{break} 
\theoremstyle{break1} \newtheorem{remark}[theorem]{Remark}
\theoremstyle{break} 
\theoremstyle{break1} \newtheorem{definition}[theorem]{Definition} 
\theoremstyle{break} \newtheorem{lemma}[theorem]{Lemma}
\theoremstyle{break} \newtheorem{corollary}[theorem]{Corollary}
\theoremstyle{break} 
\theoremstyle{break} 
\theoremstyle{break} 
\theoremstyle{break} 
\theoremstyle{break} 
\theoremstyle{break} 
\numberwithin{equation}{section}
\newcommand{\hide}[1]{}
\newcommand{\R}{{\mathbb{R}}}
\newcommand{\D}{{\mathbb{D}}}
\newcommand{\C}{{\mathbb{C}}}
\def\pcap{\mathop{{\rm pcap}}}
\def\lcap{\mathop{{\rm cap}}}
\begin{document}

\vspace*{-1cm}

\begin{center}
{\Large \bf Strong submultiplicativity of the Poincar\'e metric}
\end{center}

\medskip
\renewcommand{\thefootnote}{\arabic{footnote}}
\begin{center}
{\large Daniela Kraus and Oliver Roth}\\[1mm]
\end{center}
\footnotetext{Research supported
by a DFG grant (RO 3462/3--2).} 
\medskip

\centerline{\textit{To David Minda on the occasion of his retirement}}

\medskip

\begin{abstract}
We give a direct proof of an  important result of Solynin which says that the Poincar\'e metric  
is a strongly submultiplicative domain function.
This  result is then used to define a new capacity for compact
subsets of the complex plane $\C$, which might be called Poincar\'e capacity. If the compact set $K \subseteq \C$ is connected, then the Poincar\'e capacity of
$K$ is the same as the logarithmic capacity of $K$. In this special case, the
submultiplicativity is well--known and can be stated as an inequality for
the normalized conformal map onto the complement of $K$. Using the connection
between Poincar\'e metrics and universal covering maps  this
inequality is extended to the much wider class of universal covering maps.
\end{abstract}

\section{Introduction}

Let $\Omega$ be an open subset of the Riemann sphere $\hat{\C}$ and suppose that
$\Omega$ is \textit{hyperbolic} i.e.,
$\hat{\C}\backslash \Omega$ contains at least three pairwise different points.
Then $\Omega$ carries a unique complete conformal Riemannian metric
$\lambda_{\Omega}(z) \, |dz|$ with Gaussian curvature $-1$,
the so--called  hyperbolic metric or Poincar\'e metric of $\Omega$. 
That $\lambda_{\Omega}(z) \, |dz|$ has constant curvature $-1$ is equivalent
to the fact that in local coordinates the hyperbolic density $\lambda_{\Omega}(z)$ satisfies the nonlinear elliptic PDE
\begin{equation} \label{eq:curvaturedef}
 \Delta \log \lambda_{\Omega}(z)= {\lambda_{\Omega}(z)}^2 \, .
\end{equation}
Only in very rare cases it is
possible to give an explicit formula for the hyperbolic metric (see \cite{Ag68}).
It is therefore of interest to give good qualitative estimates for the
Poincar\'e metric, see e.g.~\cite{BP78,Hem79,Wei79} and the more recent
references \cite{BS2013,Bet08,GL01,KRS11,SV01,SV05}.

\medskip

The first aim of this paper is to give a full and direct  proof 
of the following beautiful sharp inequality due to Solynin \cite{S1999,S2010} which relates
the Poincar\'e metrics of two hyperbolic \textit{domains} with the Poincar\'e metric
of their union and their intersection. 

\begin{theorem} \label{thm}
Let $\Omega_1$ and $\Omega_2$ be domains in $\hat{\C}$ such that 
$\Omega_1 \cap \Omega_2 \not=\emptyset$. Suppose that $\Omega_1 \cup \Omega_2$ is  hyperbolic. Then
\begin{equation} \label{eq:main}
\frac{\lambda_{\Omega_1}(z) \cdot \lambda_{\Omega_2}(z)}{\lambda_{\Omega_1
  \cup \Omega_2}(z) \cdot \lambda_{\Omega_1 \cap \Omega_2}(z)} \ge 1 
\quad \text{ for all } z \in \Omega_1 \cap \Omega_2 \, .
\end{equation}
If equality holds for one point  $z \in\Omega_1 \cap \Omega_2$, then
 $\Omega_1\subseteq \Omega_2$ or $\Omega_2 \subseteq \Omega_1$. In this case, equality
 holds for all points in $\Omega_1 \cap \Omega_2$.
\end{theorem}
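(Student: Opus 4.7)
My plan is to prove the inequality via a maximum principle applied to
$$h(z) := \log \lambda_{\Omega_1}(z) + \log \lambda_{\Omega_2}(z) - \log \lambda_{\Omega_1 \cap \Omega_2}(z) - \log \lambda_{\Omega_1 \cup \Omega_2}(z)$$
on $\Omega_1 \cap \Omega_2$, the claim being $h \ge 0$. Using \eqref{eq:curvaturedef} I get $\Delta h = A + B - C - D$, where I abbreviate $A = \lambda_{\Omega_1}^2$, $B = \lambda_{\Omega_2}^2$, $C = \lambda_{\Omega_1 \cup \Omega_2}^2$, $D = \lambda_{\Omega_1 \cap \Omega_2}^2$. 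Monotonicity of the Poincar\'e metric under inclusion gives $C \le A, B \le D$ on $\Omega_1 \cap \Omega_2$, so writing $A = C+a$, $B = C+b$, $D = C+d$ with $0 \le a,b \le d$ is convenient.

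The key algebraic observation is this: suppose $z_0$ is an interior point of $\Omega_1 \cap \Omega_2$ at which $\Delta h(z_0) \ge 0$ and $h(z_0) < 0$. The first condition rewrites as $a + b \ge d$, while $h(z_0) < 0$ means $AB < CD$, i.e., $C(a+b) + ab < Cd$, which forces $a+b < d$ since $ab \ge 0$ and $C > 0$. This contradiction shows that $h$ cannot attain a negative value at a local minimum. Consequently, if one can prove that $\liminf_{z \to \zeta} h(z) \ge 0$ for every $\zeta \in \partial(\Omega_1 \cap \Omega_2)$, a standard $\varepsilon$-argument finishes: otherwise the infimum of $h - \varepsilon$ would be attained in the interior for some small $\varepsilon > 0$.

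The boundary analysis is where I expect the real work to lie. At $\zeta \in \partial \Omega_1 \cap \Omega_2$, the domains $\Omega_1$ and $\Omega_1 \cap \Omega_2$ coincide locally near $\zeta$, so by a localisation principle for the Poincar\'e density one has $\lambda_{\Omega_1}(z)/\lambda_{\Omega_1 \cap \Omega_2}(z) \to 1$ as $z \to \zeta$, while $\lambda_{\Omega_2}$ and $\lambda_{\Omega_1 \cup \Omega_2}$ are continuous at $\zeta$ with $\lambda_{\Omega_2} \ge \lambda_{\Omega_1 \cup \Omega_2}$; hence $h(z) \to \log(\lambda_{\Omega_2}(\zeta)/\lambda_{\Omega_1 \cup \Omega_2}(\zeta)) \ge 0$. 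The symmetric case is analogous. The hard case is $\zeta \in \partial \Omega_1 \cap \partial \Omega_2$, where all four densities blow up. To handle this I would first reduce to the case in which such double boundary points do not arise, by exhausting each $\Omega_j$ by an increasing sequence of relatively compact, smoothly bounded subdomains $\Omega_j^{(n)}$ whose closures can be arranged to avoid the boundary of the other, proving the inequality in that generic situation, and then taking $n \to \infty$ using the standard convergence $\lambda_{\Omega_j^{(n)}} \searrow \lambda_{\Omega_j}$ together with analogous convergence for the union and intersection.

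The equality statement follows from the same algebra. If $h(z_0) = 0$ at some $z_0 \in \Omega_1 \cap \Omega_2$, then $z_0$ is a global minimum, so $\Delta h(z_0) \ge 0$, i.e., $a + b \ge d$; and $AB = CD$ rewrites as $ab = C(d - a - b)$, forcing $d = a + b$ and $ab = 0$. Hence $\lambda_{\Omega_j}(z_0) = \lambda_{\Omega_1 \cup \Omega_2}(z_0)$ for $j = 1$ or $j = 2$, and the strong maximum principle applied to $\log \lambda_{\Omega_j} - \log \lambda_{\Omega_1 \cup \Omega_2}$ (a nonnegative function whose Laplacian is controlled by \eqref{eq:curvaturedef}) then forces $\Omega_1 \cup \Omega_2 = \Omega_j$, i.e., $\Omega_1 \subseteq \Omega_2$ or $\Omega_2 \subseteq \Omega_1$, in which case $h \equiv 0$ throughout.
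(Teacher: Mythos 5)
Your interior argument is sound and is in substance the computation of Lemma \ref{lem:2} of the paper, just organized more transparently: writing $\lambda_{\Omega_1}^2=\lambda_{\Omega_1\cup\Omega_2}^2+a$, etc., and showing that $h<0$ forces $\Delta h<0$ is exactly the paper's verification that $u=-h$ is subharmonic where it is positive. The equality analysis ($ab=0$ and $d=a+b$ at the touching point, then a strong maximum principle) also parallels the paper's argument, except that $v=\log\lambda_{\Omega_1}-\log\lambda_{\Omega_1\cup\Omega_2}$ is \emph{sub}harmonic and vanishes at an interior \emph{minimum}, so "Laplacian controlled by (\ref{eq:curvaturedef})" is not yet enough: you need a differential inequality of the form $\Delta v\le Cv$ locally before Hopf's strong maximum principle applies. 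The paper achieves the analogous step by deriving $\Delta u\ge 2\lambda_{\Omega_1}\lambda_{\Omega_2}\,u$ from an elementary logarithm inequality, and then still needs the \emph{strict} monotonicity of the hyperbolic metric to pass from equality of densities to equality of components.

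The genuine gap is at exactly the point you flag as the hard case. Your proposed reduction --- exhausting each $\Omega_j$ by smoothly bounded $\Omega_j^{(n)}$ whose closures avoid the boundary of the other --- is impossible in general. Take $\Omega_1,\Omega_2$ two overlapping disks, neither contained in the other: for large $n$ the connected set $\partial\Omega_2^{(n)}$ contains points inside $\Omega_1^{(n)}$ (deep in the lens $\Omega_1\cap\Omega_2$) and points outside $\overline{\Omega_1^{(n)}}$, so it must meet $\partial\Omega_1^{(n)}$. Double boundary points cannot be exhausted away; they occur whenever neither domain contains the other. The paper circumvents them in two steps for which your proposal has no substitute: first, the exhausting sets are bounded by finitely many \emph{analytic} Jordan curves, so the exceptional set $\partial\Omega_1^{(n)}\cap\partial\Omega_2^{(n)}$ is finite, hence polar; second, and crucially, a nonsharp version of the inequality with constant $1/\sqrt{2}$ is proved beforehand (Lemma \ref{lem:weak}, an Ahlfors-type argument applied to the auxiliary metric $\lambda_{U_1}\lambda_{U_2}/\lambda_{U_1\cup U_2}$, whose curvature is $\ge-2$), giving the a priori bound $h\ge-\tfrac12\log 2$. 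Only with that bound can one invoke the extended maximum principle for subharmonic functions with a polar exceptional boundary set; without boundedness that principle is simply false (consider $\log(1/|z|)$ on the punctured disk). Note finally that even your "easy" boundary case rests on a localisation principle $\lambda_{\Omega_1}/\lambda_{\Omega_1\cap\Omega_2}\to1$ that is only known under boundary smoothness (Lemma \ref{lem:ahl}, via the boundary Ahlfors lemma of \cite{KRR06}); the remark at the end of Section 2 shows it genuinely fails at non-smooth boundary points, so this too must be routed through the smooth approximation rather than asserted for the original domains.
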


\begin{remark}
It might be worth mentioning that in general it makes no sense to speak of the
value of the density of a conformal metric at a specific point on the
Riemann sphere, but it makes sense to speak of the value of the
\textit{quotient} of two such metrics,
see \cite[p.~57]{Min1982}, so the left hand side of inequality (\ref{eq:main}) as the product of two such
quotients is  indeed meaningful.
\end{remark}

\begin{remark}
As noted above, Theorem \ref{thm} is due to Solynin 
\cite{S1999,S2010} who even obtained an
extended form of inequality (\ref{eq:main}) for finitely many domains instead of just two domains, see
\cite[Theorem 2]{S1999}. In fact, Solynin obtained the estimate
(\ref{eq:main}) as a corollary to a much more general comparison result for
solutions for a certain class of nonlinear
elliptic PDEs. The proof we give below is somewhat similar, but more direct
and the details are different as we only use the classical maximum principle for subharmonic
functions. In order to be able to make use of the classical maximum principle we
first establish in Lemma \ref{lem:weak} a preliminary \textit{nonsharp} form
of the inequality (\ref{eq:main}) which can be proved analogously to the
classical Ahlfors lemma \cite{Ahl38}. What makes the proof of Lemma \ref{lem:weak} work is the fact that it is very useful to
multiply conformal metrics. This is just one of the many beautiful insights of
David Minda about conformal metrics which we have tried to learn from
him.  In order to treat the cases of equality in (\ref{eq:main}) we proceed along the lines of
David's  paper \cite{Min1987} which is concerned with the case of equality in Ahlfors' lemma.
\end{remark}

As pointed out by Solynin \cite{S1999,S2010},
Theorem \ref{thm} can be used to define for (almost) all compact subsets $K$ of a hyperbolic domain $\Omega \subseteq \hat{\C}$
a ``capacity'' in terms of the Poincar\'e metrics $\lambda_{\Omega \backslash
  K}(z) \, |dz|$ and $\lambda_{\Omega}(z)\, |dz|$ which depends on
the ``ambient'' domain $\Omega$, see Remark \ref{rem:sol1} below for details.
In the following, we propose a different definition which assigns to 
each compact subset of the complex plane ${\C}$ a capacity.
We slightly abuse notation and denote 
by $ \lambda_{\hat{\C}}(z) \, |dz|$
the \textit{spherical} metric on the Riemann sphere $\hat{\C}$, that is, the unique
conformal metric on $\hat{\C}$ with constant curvature $+1$.

\begin{definition} \label{def}
Let $K$ be a compact subset of $\C$. If $K$ contains at least three pairwise different
points, then we set
$$ \pcap(K):=\frac{\lambda_{\hat{\C} \backslash
    K}(\infty)}{\lambda_{\hat{\C}}(\infty)} \, .$$
If $K$ contains at most two different points, we set $\pcap(K):=0$.
We call the number $\pcap(K)$ the \textit{Poincar\'e capacity} of the compact set $K$.
\end{definition}

Note that the definition of $\pcap(K)$ is meaningful since the quotient $\lambda_{\hat{\C} \backslash
  K}/\lambda_{\hat{\C}}$ has a well--defined value at the interior point
$\infty$ of $\hat{\C}\backslash K$ for every compact set $K \subseteq \C$ with
at least three pairwise different points.

\begin{remark} \label{rem:sol1}
As noted above, Solynin's approach  \cite{S1999,S2010} 
of  relating the Poincar\'e metric of a domain $\Omega\subseteq \hat{\C}$
with  a ``capacity'' of the complement of $\Omega$ is different from
Definition \ref{def}: Solynin fixes a hyperbolic domain $\Omega \subseteq \hat{\C}$ and a point $z_0 \in \Omega$,
and considers (the logarithm of)
$$ C_{\Omega,z_0}(K):=\frac{\lambda_{\Omega\backslash K}(z_0)}{\lambda_{\Omega}(z_0)}$$
for compact subsets $K$ of $\Omega \backslash \{z_0\}$.
Note that $C_{\Omega,z_0}(K)$ depends not only on $K$, but also on the
``ambient'' domain $\Omega$
and the point $z_0$ whereas $\pcap (K)$ only depends on $K$.
One of the main advantages of the definition of $\pcap$ is the fact that it
relates directly to universal covering maps in the same way as logarithmic
capacity relates to conformal maps (see Remark \ref{rem:1} below).
 \end{remark}

Theorem \ref{thm} immediately implies the following result.

\begin{corollary}
Let $K_1$ and $K_2$ be compact subsets of $\C$. Then
$$ \pcap (K_1 \cup K_2) \cdot \pcap (K_1 \cap K_2) \le \pcap (K_1) \cdot \pcap
(K_2) \, .$$ 
\end{corollary}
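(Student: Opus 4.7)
The plan is to apply Theorem~\ref{thm} to the complementary domains $\Omega_i := \hat{\C}\setminus K_i$ at the distinguished point $z = \infty$, exploiting the set-theoretic identities
$$ \Omega_1 \cap \Omega_2 = \hat{\C}\setminus (K_1 \cup K_2), \qquad \Omega_1 \cup \Omega_2 = \hat{\C}\setminus (K_1 \cap K_2).$$
Since $K_1, K_2 \subseteq \C$ are compact, $\infty$ belongs to both $\Omega_1$ and $\Omega_2$, so the intersection $\Omega_1 \cap \Omega_2$ is automatically non-empty; this is exactly what makes the Poincar\'e capacity a natural device for this comparison.

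First I would treat the main case, where $K_1 \cap K_2$ contains at least three pairwise different points. Then $\Omega_1 \cup \Omega_2 = \hat{\C}\setminus(K_1 \cap K_2)$ is hyperbolic, and since $K_1 \cap K_2$ is contained in each of $K_1$, $K_2$, $K_1 \cup K_2$, every one of the four complements is hyperbolic as well. Theorem~\ref{thm} applied at $z=\infty$ yields
$$ \lambda_{\hat{\C}\setminus K_1}(\infty)\,\lambda_{\hat{\C}\setminus K_2}(\infty) \;\ge\; \lambda_{\hat{\C}\setminus(K_1\cup K_2)}(\infty)\,\lambda_{\hat{\C}\setminus(K_1\cap K_2)}(\infty).$$
Dividing both sides by $\lambda_{\hat{\C}}(\infty)^2$ and invoking Definition~\ref{def} converts this into the claimed inequality. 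Here one has to note, as pointed out in the remark after Theorem~\ref{thm}, that although the individual densities are only defined up to a choice of chart at $\infty$, the quotients $\lambda_{\hat{\C}\setminus K_i}/\lambda_{\hat{\C}}$ are genuine functions, so the manipulation is legitimate.

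It remains to dispose of the degenerate case in which $K_1 \cap K_2$ has at most two points. By Definition~\ref{def}, $\pcap(K_1\cap K_2) = 0$, so the left-hand side of the asserted inequality vanishes, while the right-hand side is non-negative; thus the inequality holds trivially. This also automatically handles the subcases where $K_1$ or $K_2$ itself has fewer than three points, since then $K_1 \cap K_2$ inherits the same defect.

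There is essentially no hard step here: the content of the corollary is packed entirely into Theorem~\ref{thm}, and the only real work is the bookkeeping of hypotheses, in particular verifying that $\Omega_1 \cup \Omega_2$ is hyperbolic (which is exactly the condition $|K_1 \cap K_2|\ge 3$) and gracefully absorbing the remaining cases into the definition $\pcap=0$. Consequently I would keep the exposition short and emphasise the duality $\Omega_1 \cap \Omega_2 \leftrightarrow K_1 \cup K_2$, $\Omega_1 \cup \Omega_2 \leftrightarrow K_1 \cap K_2$ as the conceptual heart of the argument.
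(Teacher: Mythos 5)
Your argument is correct and is exactly the (unwritten) argument the paper intends: the corollary is presented there as an immediate consequence of Theorem~\ref{thm} via the De Morgan duality $\hat{\C}\setminus(K_1\cup K_2)=(\hat{\C}\setminus K_1)\cap(\hat{\C}\setminus K_2)$, $\hat{\C}\setminus(K_1\cap K_2)=(\hat{\C}\setminus K_1)\cup(\hat{\C}\setminus K_2)$ evaluated at $z=\infty$, with the degenerate case $|K_1\cap K_2|\le 2$ absorbed by the convention $\pcap=0$. One small point of bookkeeping: $\hat{\C}\setminus K_i$ need not be connected, so it is not literally a ``domain''; you should therefore either invoke the open-set version (Theorem~\ref{thm:1}) in place of Theorem~\ref{thm}, or apply Theorem~\ref{thm} to the components of $\hat{\C}\setminus K_1$ and $\hat{\C}\setminus K_2$ containing $\infty$ and then pass back to the full complements by the monotonicity of Lemma~\ref{lem:hypmetricprinciple} --- the paper proves Theorem~\ref{thm:1} for open sets precisely to cover such situations.
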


In particular, we  have the \textit{strong subadditivity
  property}
$$ \log  \pcap (K_1 \cup K_2) \le \log \pcap(K_1)+\log \pcap(K_2)-\log \pcap
(K_1 \cap K_2)$$
in the sense of Choquet's general theory of capacities \cite{Choquet} provided
that we interpret this inequality with care in the case when $K_1 \cap K_2$ contains
at most two distinct points. Therefore,
the  inequality of Theorem \ref{thm}, which in local coordinates takes the
form
$$\lambda_{\Omega_1}(z) \cdot \lambda_{\Omega_2}(z) \ge \lambda_{\Omega_1
  \cup \Omega_2}(z) \cdot \lambda_{\Omega_1 \cap \Omega_2}(z) \, ,
$$ 
 can be viewed as a \textit{strong
  submultiplicativity property} of the Poincar\'e metric.
We note that the analogous result for the \textit{logarithmic capacity}
$\lcap (K)$ of a compact set $K$ (see \cite[Chapter 5]{Ransford}), i.e.,
\begin{equation} \label{eq:cap}
 \log \lcap (K_1\cup K_2) \le \log \lcap(K_1)+\log \lcap(K_2)-\log \lcap
(K_1 \cap K_2)
\end{equation}
is  a well--known inequality in potential theory (see e.g.~\cite{Renggli} for a proof).

\begin{remark}[Poincar\'e capacity by way of universal covering maps] \label{rem:1}
It is instructive to point out an alternate way of defining
the Poincar\'e capacity  based on universal covering maps.
Let $\D:=\{z \in \C \, : \, |z|<1\}$ denote the open unit disk, let $K
\subseteq \C$ be a compact set with at least three pairwise different points,
 and denote by
 $\pi_K$  the universal covering map from $\hat{\C} \backslash
 \overline{\D}$ onto $\hat{\C} \backslash K$  normalized in such a way that $\pi_K(\infty)=\infty$ and 
$$\pi_K'(\infty):=\lim \limits_{z \to \infty} \frac{\pi_K(z)}{z}>0 \, .$$
Then $\pi_K$ has an expansion at $\infty$ of the form
\begin{equation} \label{eq:expansion}
\pi_K(z)=\pi'_K(\infty) z+\sum \limits_{k=0}^{\infty} a_j z^{-j} \, , \qquad z
\in \C \backslash \overline{\D} \, .
\end{equation}
 
Recall  that $\lambda_{\hat{\C}\backslash K}(z) \, |dz|$
and $\lambda_{\hat{\C}\backslash \overline{\D}}(z) \, |dz|$ are related via
$\pi_K$ as follows 
$$\lambda_{\hat{\C} \backslash K}(\pi_K(z)) \, |\pi_K'(z)|=\lambda_{\hat{\C}
  \backslash \overline{\D}}(z) \,  ,
\qquad z\in \hat{\C} \backslash \overline{\D} \, . $$
Since in local coordinates
$$ \lambda_{\hat{\C}}(z)=\frac{2}{1+|z|^2} \quad \text{ and } \quad 
\lambda_{\hat{\C} \backslash \overline{\D}}(z)=\frac{2}{|z|^2-1} \,
, $$  we therefore get that
\begin{eqnarray*}
 \pcap(K)&=& \frac{\lambda_{\hat{\C} \backslash K}(\infty)}{\lambda_{\hat{\C}}(\infty)}
=\lim \limits_{z \to \infty} \frac{\lambda_{\hat{\C} \backslash K}(\pi_K(z))
  \, |\pi'_K(z)|}{\lambda_{\hat{\C}}(\pi_K(z)) \, |\pi'_K(z)|}=
\lim \limits_{z \to \infty} \frac{\lambda_{\hat{\C} \backslash
    \overline{\D}}(z)}{\lambda_{\hat{\C}}(\pi_K(z)) \, |\pi'_K(z)|}\\
&=& \lim \limits_{z \to \infty} \frac{1}{|z|^2-1}\cdot 
\frac{1+|\pi_K(z)|^2}{|\pi'_K(z)|}\overset{(\ref{eq:expansion})}{=}\pi'_K(\infty) \, .
\end{eqnarray*}
\end{remark}

In view of this remark, Theorem \ref{thm} can be rephrased in the following way.

\begin{corollary}\label{cor:2}
Let $K_1$ and $K_2$ be compact subsets of $\C$ such that $K_1 \cap K_2$ contains at
least three pairwise different points. Then
\begin{equation} \label{eq:cover}
 \log \pi'_{K_1 \cup K_2}(\infty)\le \log \pi'_{K_1}(\infty)+\log
\pi'_{K_2}(\infty)-\log \pi_{K_1 \cap K_2}'(\infty) \, . 
\end{equation}
\end{corollary}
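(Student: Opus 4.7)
The plan is to apply Theorem \ref{thm} to the complementary domains $\Omega_1 := \hat{\C} \setminus K_1$ and $\Omega_2 := \hat{\C} \setminus K_2$ evaluated at the point $z = \infty$, and then translate the resulting inequality into the language of universal covering maps via Remark \ref{rem:1}.

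First I would check that the hypotheses of Theorem \ref{thm} are met. Since $K_1, K_2 \subseteq \C$, the point $\infty$ lies in $\Omega_1 \cap \Omega_2$, so in particular $\Omega_1 \cap \Omega_2 \ne \emptyset$. Because $\Omega_1 \cup \Omega_2 = \hat{\C} \setminus (K_1 \cap K_2)$ and $K_1 \cap K_2$ contains at least three pairwise different points by assumption, the union $\Omega_1 \cup \Omega_2$ is hyperbolic. (This also forces each individual $\Omega_i$ as well as $\Omega_1 \cap \Omega_2 = \hat{\C} \setminus (K_1 \cup K_2)$ to be hyperbolic, since their complements contain $K_1 \cap K_2$.) Applying Theorem \ref{thm} at $z = \infty$ then yields
\begin{equation*}
\lambda_{\Omega_1}(\infty) \cdot \lambda_{\Omega_2}(\infty) \ge \lambda_{\Omega_1 \cup \Omega_2}(\infty) \cdot \lambda_{\Omega_1 \cap \Omega_2}(\infty).
\end{equation*}

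Strictly speaking, as noted in the remark after Theorem \ref{thm}, individual densities at the spherical point $\infty$ are not coordinate-invariant numbers, but the inequality above is meaningful after dividing both sides by $\lambda_{\hat{\C}}(\infty)^2$. Doing so converts each factor into a well-defined quotient, and recognizing these quotients via Definition \ref{def} gives
\begin{equation*}
\pcap(K_1) \cdot \pcap(K_2) \ge \pcap(K_1 \cup K_2) \cdot \pcap(K_1 \cap K_2).
\end{equation*}
All four Poincar\'e capacities are strictly positive because each of $K_1$, $K_2$, $K_1 \cup K_2$ and $K_1 \cap K_2$ contains at least three pairwise different points, so we may take logarithms and rearrange.

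Finally, I would invoke the identification $\pcap(K) = \pi'_K(\infty)$ established in Remark \ref{rem:1} (valid for every compact $K \subseteq \C$ with at least three distinct points, and hence applicable to all four sets appearing) to rewrite the inequality as
\begin{equation*}
\log \pi'_{K_1 \cup K_2}(\infty) \le \log \pi'_{K_1}(\infty) + \log \pi'_{K_2}(\infty) - \log \pi'_{K_1 \cap K_2}(\infty),
\end{equation*}
which is exactly \eqref{eq:cover}. There is no real obstacle here; the only subtle point is the passage from the formal inequality for $\lambda_{\Omega_i}(\infty)$ to a genuine statement about quotients, which is handled by normalizing against the spherical metric $\lambda_{\hat{\C}}(\infty)$ and reading off the definition of $\pcap$.
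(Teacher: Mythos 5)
Your proposal is correct and follows essentially the same route as the paper: the paper derives the corollary by combining Theorem \ref{thm} (applied to the complementary domains, giving the submultiplicativity of $\pcap$) with the identification $\pcap(K)=\pi'_K(\infty)$ from Remark \ref{rem:1}. Your verification of the hypotheses and the normalization by $\lambda_{\hat{\C}}(\infty)^2$ to make the quotients well defined are exactly the (implicit) steps the paper relies on.
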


In the special case when $K$ is connected, its complement
$\hat{\C} \backslash K$ is a simply connected domain. Hence
the universal covering map $\pi_K$ is  a conformal map 
from $\hat{\C} \backslash \overline{\D}$ onto $\hat{\C} \backslash K$.
In this case the logarithmic capacity $\lcap(K)$ can be
computed as
$\lcap(K)=\pi'_K(\infty)$, see
e.g.~\cite[Corollary 9.9]{Pom}. Combined with Remark \ref{rem:1} this leads to
the following result. 

\begin{corollary}[Poincar\'e capacity vs.~logarithmic capacity] 
Let $K$ be a compact and connected subset of $\C$. Then
$\pcap(K)=\lcap(K)$.
\end{corollary}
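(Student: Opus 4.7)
The plan is to combine Remark~\ref{rem:1} directly with the classical identification of the logarithmic capacity with the conformal radius at infinity. I would first dispose of the degenerate case in which $K$ contains at most two points: since $K$ is connected, it must be empty or a single point, so both $\pcap(K) = 0$ (by Definition~\ref{def}) and $\lcap(K) = 0$, and the equality is trivial.

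Now assume $K$ is compact, connected, and contains at least three points. The essential topological input is that connectedness of $K$ forces the component $\Omega_\infty$ of $\hat{\C} \setminus K$ containing $\infty$ to be simply connected; this is a standard consequence of Alexander duality on $\hat{\C}$ (a connected compactum has vanishing reduced \v{C}ech $H^0$, dual to the first homology of the complement, so every component of $\hat{\C}\setminus K$ is simply connected). Consequently the universal covering map $\pi_K\colon \hat{\C} \setminus \overline{\D} \to \Omega_\infty$ normalized as in Remark~\ref{rem:1} is in fact a conformal isomorphism, so the computation carried out at the end of Remark~\ref{rem:1} applies verbatim and yields $\pcap(K) = \pi'_K(\infty)$.

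On the other hand, the classical identity $\lcap(K) = \pi'_K(\infty)$ for precisely this normalized conformal map is the content of Pommerenke's Corollary~9.9 in \cite{Pom}, as already noted in the excerpt. Combining the two equalities gives $\pcap(K) = \lcap(K)$. The only real content is the topological verification that $\Omega_\infty$ is simply connected; the rest is pure bookkeeping plus the citation to Pommerenke, so I expect no serious obstacle here.
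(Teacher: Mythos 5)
Your proposal is correct and follows essentially the same route as the paper: identify $\pcap(K)=\pi_K'(\infty)$ via the computation of Remark~\ref{rem:1} (using that the covering map is conformal when the complement is simply connected) and then invoke Pommerenke's Corollary~9.9 for $\lcap(K)=\pi_K'(\infty)$. Your treatment is in fact slightly more careful than the paper's, since you work with the component of $\hat{\C}\setminus K$ containing $\infty$ and justify its simple connectivity, whereas the paper simply asserts that $\hat{\C}\setminus K$ is a simply connected domain.
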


In particular, if $K_1$, $K_2$ and $K_1\cap K_2$ are connected compact sets in
$\C$, then  $\pi_{K_1}$, $\pi_{K_2}$, $\pi_{K_1 \cap K_2}$ and $\pi_{K_1 \cup
  K_2}$ are all conformal maps. In this case,  
(\ref{eq:cover}) is a well--known inequality for conformal maps, which 
follows for instance immediately from (\ref{eq:cap}). Therefore, Corollary \ref{cor:2}
shows that the same inequality is in fact true for the much wider class of
universal covering maps. Finally, we note that if $\hat{\C}\backslash K$ is
not simply connected, then in general $\pcap(K)\not=\lcap(K)$. For instance,
take a finite set $K$ with at least three pairwise different points. Then
$\lcap(K)=0$, but $\pcap(K)>0$.

\section{Proof of Theorem \ref{thm}}

We will prove a slight extension of Theorem \ref{thm} by allowing  open sets
instead of domains. It is clearly sufficient to restrict consideration to
subsets of $\C$.
For this purpose, we
denote for an open set $U \subseteq \C$ and a point $z \in U$  by $U(z)$ the
connected component (i.e., the maximal open connected subset) of $U$ which contains the
point $z$. 

\begin{theorem} \label{thm:1}
Let $\Omega_1$ and $\Omega_2$ be open sets in $\C$ such that 
$\Omega_1 \cap \Omega_2 \not=\emptyset$ and $\Omega_1 \cup \Omega_2$ is  hyperbolic. Then
\begin{equation} \label{eq:estimate}
 \lambda_{\Omega_1}(z) \cdot \lambda_{\Omega_2}(z) \ge \lambda_{\Omega_1
  \cup \Omega_2}(z) \cdot \lambda_{\Omega_1 \cap \Omega_2}(z) 
\quad \text{ for all } z \in \Omega_1 \cap \Omega_2 \, .
\end{equation}
If equality holds for one point  $z \in\Omega_1 \cap \Omega_2$, then
 $\Omega_1(z)\subseteq \Omega_2(z)$ or $\Omega_2(z) \subseteq \Omega_1(z)$. In
 this case, equality
 holds for all points in $\Omega_1(z) \cap \Omega_2(z)$.
\end{theorem}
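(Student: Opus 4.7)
The plan is to reduce the open-set version to the case of domains, invoke the preliminary nonsharp Lemma~\ref{lem:weak} for the inequality itself, and then treat the case of equality via the classical maximum principle combined with Karamata's inequality and the equality case of Ahlfors' lemma.

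\textbf{Reduction and inequality.} For any $z\in\Omega_1\cap\Omega_2$, the hyperbolic density of an open set at $z$ depends only on the connected component through $z$, so each of the four densities appearing in~(\ref{eq:estimate}) at $z$ equals the corresponding density of the respective component domain. Combining this with domain monotonicity along the (possibly strict) inclusions $\Omega_1(z)\cup\Omega_2(z)\subseteq(\Omega_1\cup\Omega_2)(z)$ and the analogous inclusion for the intersection, it suffices to prove~(\ref{eq:estimate}) when $\Omega_1,\Omega_2$ are themselves domains. In that setting Lemma~\ref{lem:weak} is precisely~(\ref{eq:estimate}).

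\textbf{Equality case.} Assume $\Omega_1,\Omega_2$ are domains and equality in~(\ref{eq:estimate}) holds at $z^*\in\Omega_1\cap\Omega_2$. Set
\[
w:=\log\lambda_{\Omega_1}+\log\lambda_{\Omega_2}-\log\lambda_{\Omega_1\cup\Omega_2}-\log\lambda_{\Omega_1\cap\Omega_2}\qquad\text{on }\Omega_1\cap\Omega_2.
\]
Then $w\ge 0$ by the previous step and $w(z^*)=0$ by assumption, so $z^*$ is an interior minimum of $w$ and $\Delta w(z^*)\ge 0$. The curvature equation~(\ref{eq:curvaturedef}) yields
\[
\Delta w=\lambda_{\Omega_1}^2+\lambda_{\Omega_2}^2-\lambda_{\Omega_1\cup\Omega_2}^2-\lambda_{\Omega_1\cap\Omega_2}^2.
\]
At $z^*$, domain monotonicity provides $\log\lambda_{\Omega_1\cup\Omega_2}\le\log\lambda_{\Omega_i}\le\log\lambda_{\Omega_1\cap\Omega_2}$ for $i=1,2$, while $w(z^*)=0$ reads $\log\lambda_{\Omega_1}+\log\lambda_{\Omega_2}=\log\lambda_{\Omega_1\cup\Omega_2}+\log\lambda_{\Omega_1\cap\Omega_2}$. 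Karamata's inequality for the strictly convex function $t\mapsto e^{2t}$ then gives $\Delta w(z^*)\le 0$, with equality if and only if the multisets $\{\lambda_{\Omega_1},\lambda_{\Omega_2}\}$ and $\{\lambda_{\Omega_1\cup\Omega_2},\lambda_{\Omega_1\cap\Omega_2}\}$ coincide at $z^*$. Hence $\Delta w(z^*)=0$ and, after possibly swapping $\Omega_1$ and $\Omega_2$, $\lambda_{\Omega_1}(z^*)=\lambda_{\Omega_1\cup\Omega_2}(z^*)$. Since $\Omega_1\subseteq\Omega_1\cup\Omega_2$ and the two hyperbolic metrics coincide at an interior point, the equality case of Ahlfors' lemma in Minda's form \cite{Min1987} forces $\Omega_1=\Omega_1\cup\Omega_2$, i.e.\ $\Omega_2\subseteq\Omega_1$; under this containment $\Omega_1\cup\Omega_2=\Omega_1$ and $\Omega_1\cap\Omega_2=\Omega_2$, so equality in~(\ref{eq:estimate}) is trivial throughout $\Omega_1\cap\Omega_2$. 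Transposing back to the open-set setting via the reduction yields the stated dichotomy $\Omega_1(z)\subseteq\Omega_2(z)$ or $\Omega_2(z)\subseteq\Omega_1(z)$ together with equality at every point of $\Omega_1(z)\cap\Omega_2(z)$.

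\textbf{Main obstacle.} The substance is hidden in Lemma~\ref{lem:weak}, established by an Ahlfors-style construction that exploits the subharmonicity enjoyed by products of conformal metrics. Granted that lemma, Theorem~\ref{thm:1} follows from the concise maximum-principle/Karamata argument above combined with the classical equality case of the domain-monotonicity for hyperbolic metrics.
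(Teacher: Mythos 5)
There is a genuine gap at the heart of your argument: the sharp inequality \eqref{eq:estimate} is never actually proved. You write that, after reducing to domains, ``Lemma~\ref{lem:weak} is precisely~\eqref{eq:estimate}'', but Lemma~\ref{lem:weak} only gives
\[
\lambda_{\Omega_1}(z)\,\lambda_{\Omega_2}(z)\;\ge\;\tfrac{1}{\sqrt{2}}\,\lambda_{\Omega_1\cup\Omega_2}(z)\,\lambda_{\Omega_1\cap\Omega_2}(z),
\]
i.e.\ the statement with the extraneous factor $1/\sqrt{2}$ --- you yourself call it ``nonsharp'' in your opening paragraph. Removing that factor is where essentially all of the work lies: the paper does it in Lemma~\ref{lem:2} by showing that $\log^{+}$ of the quotient is subharmonic, controlling its boundary behaviour via the boundary version of Ahlfors' lemma (Lemma~\ref{lem:ahl}, which requires smooth boundaries, hence the restriction to sets bounded by finitely many analytic Jordan curves), and invoking the extended maximum principle --- for which the a priori bound $\log\sqrt{2}$ from Lemma~\ref{lem:weak} is exactly what is needed --- followed by exhaustion via Lemmas~\ref{lem:3} and~\ref{lem:4}. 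None of this appears in your proposal, so the main assertion of the theorem is unsupported; and since your equality analysis presupposes $w\ge 0$, it too rests on the missing step. (Your reduction to components is fine for the inequality, though for the equality case you would still need to track when $\lambda_{\Omega_1(z)\cup\Omega_2(z)}(z)=\lambda_{(\Omega_1\cup\Omega_2)(z)}(z)$, which does not hold automatically.)

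That said, your treatment of the equality case, \emph{granted} the sharp inequality, is correct and genuinely different from the paper's. At an interior minimum of $w$ one has $\Delta w(z^*)\ge 0$, while the curvature equation plus the majorization $\log\lambda_{\Omega_1\cup\Omega_2}\le\log\lambda_{\Omega_i}\le\log\lambda_{\Omega_1\cap\Omega_2}$ with equal sums and Karamata's inequality for $t\mapsto e^{2t}$ give $\Delta w(z^*)\le 0$; strict convexity then forces the multiset identity, and the strict domain monotonicity of the hyperbolic metric yields $\Omega_1=\Omega_1\cup\Omega_2$ or $\Omega_2=\Omega_1\cup\Omega_2$ from a single point of equality. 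This is shorter than the paper's route via the differential inequality $\Delta u\ge 2\lambda_{\Omega_1}\lambda_{\Omega_2}u$ and Hopf's strong maximum principle, and it would be worth keeping --- but it cannot substitute for the missing proof of \eqref{eq:estimate} itself.
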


The proof of Theorem \ref{thm:1} will occupy the rest of this paper.
We first recall the well--known \textit{monotonicity property of the hyperbolic metric}.

\begin{lemma} \label{lem:hypmetricprinciple}
Let $\Omega\subseteq \C$ be a hyperbolic open set and let $U$ 
be an open subset of $\Omega$.  Then
\begin{equation} \label{eq:est1}
\lambda_{\Omega}(z) \le \lambda_{U}(z)
\qquad \text{ for every } z \in U \, .
\end{equation}
In particular,
\begin{equation} \label{eq:bdd1}
 \lim \limits_{z \to \xi} \lambda_{\Omega}(z)=+\infty \quad \text{ for every } \xi
\in \partial \Omega\, .
\end{equation}
\end{lemma}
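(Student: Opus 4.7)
The plan is to derive (\ref{eq:est1}) as a one-line consequence of Ahlfors' lemma, and then to deduce (\ref{eq:bdd1}) by comparing $\lambda_\Omega$ pointwise against the hyperbolic density of a conveniently chosen thrice-punctured sphere containing $\Omega$.

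First I would handle the monotonicity (\ref{eq:est1}). The restriction $\sigma := \lambda_\Omega|_U$ is a smooth, strictly positive conformal metric on $U$, and from the curvature equation (\ref{eq:curvaturedef}) applied on $\Omega$ one sees that its Gaussian curvature on $U$ is identically $-1$. Ahlfors' lemma --- any conformal pseudo-metric on a hyperbolic open set $U$ with curvature at most $-1$ is dominated pointwise by $\lambda_U$ --- then gives $\lambda_\Omega(z) = \sigma(z) \le \lambda_U(z)$ for every $z \in U$, which is (\ref{eq:est1}). One could alternatively lift the inclusion $U \hookrightarrow \Omega$ through universal covers and conclude via the Schwarz lemma; the two routes are essentially equivalent.

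Next I would deduce (\ref{eq:bdd1}). Fix $\xi \in \partial\Omega$. Hyperbolicity of $\Omega$ ensures that $\hat{\C}\setminus\Omega$ contains at least three distinct points, so I may pick $\alpha,\beta \in \hat{\C}\setminus\Omega$ with $\xi,\alpha,\beta$ pairwise distinct. Then $\Omega \subseteq V := \hat{\C}\setminus\{\xi,\alpha,\beta\}$, and (\ref{eq:est1}) gives $\lambda_\Omega(z) \ge \lambda_V(z)$ for every $z \in \Omega$. It thus suffices to verify $\lambda_V(z)\to\infty$ as $z\to\xi$ within $V$; after a M\"obius normalization sending $(\xi,\alpha,\beta)$ to $(0,1,\infty)$ this is the classical asymptotic $\lambda_{\C\setminus\{0,1\}}(z) \sim 1/(|z|\log(1/|z|))$ near $z=0$, which follows from the explicit description of the hyperbolic metric of the twice-punctured plane via the elliptic modular function.

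The main obstacle, if any, is not internal to this lemma: step (\ref{eq:est1}) is immediate from Ahlfors' lemma, and the reduction in step (\ref{eq:bdd1}) is purely algebraic. The only non-trivial input is the boundary asymptotic of the hyperbolic metric on the thrice-punctured sphere near a puncture, which I would simply cite as a classical fact rather than reprove.
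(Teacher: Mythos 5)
Your proposal is correct and follows essentially the same route as the paper: the monotonicity (\ref{eq:est1}) via the maximality characterization of $\lambda_U$ (i.e.\ Ahlfors' lemma), and the blow-up (\ref{eq:bdd1}) by comparison with a punctured plane containing $\Omega$ with a puncture at $\xi$, using the classical asymptotic of $\lambda_{\C\setminus\{0,1\}}$ near a puncture (the paper cites Hempel's formula (4.1) for this, but it is the same fact). The only cosmetic difference is that you take the auxiliary punctures from $\hat{\C}\setminus\Omega$ while the paper takes a second boundary point $\eta\in\partial\Omega$; both choices are available here.
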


\begin{proof}
The estimate (\ref{eq:est1}) follows directly from the definition of the hyperbolic metric as the
\textit{maximal} conformal metric with curvature $-1$. 
In order to prove (\ref{eq:bdd1}) let $\xi,\eta$ be two different boundary
points of $\Omega$, so $\Omega \subseteq \C \backslash\{\xi,\eta\}$.
 Then
$$ \lim \limits_{z \to \xi} \lambda_{\C\backslash \{\xi,\eta\}}(z)=+\infty \,
,$$
see e.g.~\cite[formula (4.1)]{Hem79}. Since $\lambda_{\Omega}(z) \ge
\lambda_{\C\backslash \{\xi,\eta\}}(z)$ by (\ref{eq:est1}), we deduce
$\lambda_{\Omega}(z) \to +\infty$ as $z \to \xi$.
\end{proof}

In order to prove Theorem \ref{thm:1}, we need more precise information
about the boundary behaviour of $\lambda_{\Omega}$ than  provided
by Lemma \ref{lem:hypmetricprinciple}. At least for smooth open sets
such information is available
with the help of a boundary version of Ahlfors' lemma \cite{Ahl38} proved in \cite{KRR06}.

\medskip

We first make precise what we mean by ``smooth'' open sets.
We call a Jordan domain $G$ 
(i.e., a domain bounded by 
a Jordan curve in $\C$) smooth, if there is a conformal map $\phi$ from $\D$ onto
$G$ such that $|\phi'|$ extends continuously  to
$\partial \D$ with $|\phi'| \not=0$ on $\partial \D$.
By Carath\'eodory's extension theorem, this conformal map $\phi$ extends to
a homeomorphism of the closures $\overline{\D}$ and $\overline{G}$.

\begin{definition}
Let   $\Omega \subseteq \C$ 
be an open set. A subset $\Gamma$ of the boundary of $\Omega$ is called smooth 
if  for every point $\xi \in \Gamma$
there exists a smooth Jordan domain $G \subseteq \Omega$ and an open 
 neighborhood $V \subseteq \C$ of $\xi$ such that $\xi \in \partial G \cap V
 \subseteq \Gamma$. We call the open set $\Omega$ smooth, if $\partial \Omega$
 is smooth.
\end{definition}

Note that if an open set $\Omega\subseteq \C$ is bounded by finitely many
analytic Jordan
curves, then $\Omega$ is smooth.

\begin{lemma} \label{lem:ahl}
Let $\Omega \subseteq \C$ be a hyperbolic open set and let $U$ be an
open subset of $\Omega$. Suppose that $\Gamma \subseteq \partial U
\cap \partial \Omega$ is a smooth subset of the boundary of $U$ as well as of
the boundary of $\Omega$. Then
$$ \lim \limits_{z \to \xi} \frac{\lambda_{\Omega}(z)}{\lambda_U(z)} =1 \quad
\text{ for every } \xi \in \Gamma \, . $$
\end{lemma}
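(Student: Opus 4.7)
By the monotonicity principle of Lemma \ref{lem:hypmetricprinciple} applied to $U\subseteq\Omega$, we have $\lambda_\Omega(z)\le\lambda_U(z)$ for every $z\in U$, so the quotient $\lambda_\Omega/\lambda_U$ is automatically bounded above by $1$; the content of the lemma is therefore the matching one-sided asymptotic
$$\liminf_{z\to\xi}\frac{\lambda_\Omega(z)}{\lambda_U(z)}\ge 1 \qquad (\xi\in\Gamma).$$
Fix $\xi\in\Gamma$. Using the smoothness of $\Gamma$ as a subset of $\partial U$, I pick a smooth Jordan domain $G\subseteq U$ and an open neighborhood $V$ of $\xi$ with $\xi\in\partial G\cap V\subseteq\Gamma$. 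Since $G\subseteq U\subseteq\Omega$, a second application of Lemma \ref{lem:hypmetricprinciple} yields $\lambda_\Omega\le\lambda_U\le\lambda_G$ on $G$, and hence
$$\frac{\lambda_\Omega(z)}{\lambda_G(z)}\le\frac{\lambda_\Omega(z)}{\lambda_U(z)}\le 1 \qquad (z\in G).$$
A squeeze argument reduces the proof to the single statement that $\lambda_\Omega(z)/\lambda_G(z)\to 1$ as $z\to\xi$ from inside $G$. The advantage of this reduction is that $G$ is a smooth Jordan domain, whose hyperbolic density admits explicit boundary asymptotics at $\xi$ via its Riemann map from $\D$.

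For the final comparison between $\lambda_\Omega$ and $\lambda_G$, the crucial observation is that $G$ and $\Omega$ share the common smooth boundary arc $\partial G\cap V\subseteq\Gamma\subseteq\partial\Omega$, and the smoothness of $\Gamma$ as a subset of $\partial\Omega$ (the second half of the hypothesis) is precisely what allows a comparison argument to be pushed across this arc. Concretely, for each $\varepsilon\in(0,1)$ the metric $(1-\varepsilon)\lambda_G$ on $G$ has constant Gaussian curvature $-1/(1-\varepsilon)^2<-1$, and I would appeal to the boundary version of Ahlfors' lemma from \cite{KRR06} to conclude that
$$(1-\varepsilon)\lambda_G(z)\le\lambda_\Omega(z)$$
on some neighborhood of $\xi$ inside $G$. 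Letting $\varepsilon\downarrow 0$ then yields the required $\liminf\ge 1$.

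\textbf{Main obstacle.} The step requiring the most care is the localization needed to apply the KRR06 boundary Ahlfors lemma. The inequality $(1-\varepsilon)\lambda_G\le\lambda_\Omega$ cannot hold on the whole of $G$, because along the portion of $\partial G$ lying in the interior of $\Omega$ the metric $(1-\varepsilon)\lambda_G$ blows up while $\lambda_\Omega$ remains finite. One is therefore forced to work inside a small set $G\cap D$ with $D$ a disk around $\xi$ contained in $V$, whose boundary decomposes into the smooth arc $\partial G\cap D\subseteq\Gamma$ (where the boundary Ahlfors lemma supplies the required asymptotic comparison) and the inner arc $\partial D\cap\overline{G}$ (a compact subset of $\Omega$ where $\lambda_\Omega$ and $\lambda_G$ are continuous and positive, so their ratio is uniformly controlled). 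Combining these two pieces of boundary information through the classical maximum principle for the subharmonic function $\log(\lambda_G/\lambda_\Omega)$ on $G\cap D$, and shrinking the disk appropriately, yields the desired estimate; this localization is the substance of the argument carried out in \cite{KRR06}.
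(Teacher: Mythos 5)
Your proof is correct and takes essentially the same approach as the paper: monotonicity (Lemma \ref{lem:hypmetricprinciple}) gives $\lambda_\Omega/\lambda_U\le 1$, and the reverse bound $\liminf_{z\to\xi}\lambda_\Omega(z)/\lambda_U(z)\ge 1$ is exactly the boundary Ahlfors lemma of \cite{KRR06}, which the paper invokes directly for the pair $\lambda_\Omega,\lambda_U$ (both of curvature $-1$, with $\lambda_\Omega\to\infty$ on $\Gamma\subseteq\partial\Omega$), so your detour through the smooth Jordan domain $G$ is not needed. The only imprecision is in your sketch of the internals of that cited result: the inner arc $\partial D\cap\overline{G}$ is not a compact subset of $\Omega$ (its endpoints lie on $\Gamma\subseteq\partial\Omega$), and a mere uniform bound $\lambda_G/\lambda_\Omega\le M$ there would only give $(1-\varepsilon)\lambda_G\le M\lambda_\Omega$ from the maximum principle rather than the comparison with constant $1$ that you need; since you explicitly defer this localization to \cite{KRR06}, this does not affect the correctness of your overall argument.
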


\begin{proof}
In view of Lemma \ref{lem:hypmetricprinciple} we have $\lambda_{\Omega}(z)  \le
\lambda_{U}(z)$  for all $z \in U$, so we only need to prove
\begin{equation} \label{eq:ahl1}
\liminf \limits_{z \to \xi} \frac{\lambda_{\Omega}(z)}{\lambda_U(z)} \ge 1
\text{ for every } \xi \in \Gamma  \, .
\end{equation}
Since $\lim_{z \to \xi} \lambda_{\Omega}(z)=+\infty$ for every $\xi \in \Gamma
\subseteq \partial \Omega$ by Lemma
\ref{lem:hypmetricprinciple} and since both metrics $\lambda_{\Omega}(z) \, |dz|$
and $\lambda_U(z) \, |dz|$ have constant curvature $-1$, we can apply
the boundary Ahlfors lemma (Theorem
5.1 in \cite{KRR06}), and this  gives (\ref{eq:ahl1}).
\end{proof}

It is always possible to exhaust a given hyperbolic set by smooth
hyperbolic sets. This is a consequence of  the following well--known
result, see e.g.~\cite[p.~91]{Str}.

\begin{lemma} \label{lem:3}
Let $\Omega_1$ and $\Omega_2$ be open sets in $\C$ such that $\Omega_1 \cap
\Omega_2\not=\emptyset$ and $\Omega_1 \cup \Omega_1$ is hyperbolic. Then for
each $n=1,2, \ldots$ there exist smooth open subsets $\Omega_{1,n}$ of $\Omega_1$ and
$\Omega_{2,n}$ of $\Omega_2$  such that
\begin{itemize}
\item[(a)] $\Omega_{1,n}$ is compactly contained in $\Omega_{1,n+1}$ and
  $\Omega_{2,n}$ is compactly contained in $\Omega_{2,n+1}$ for
  each $n=1,2,\ldots$;
\item[(b)] $\bigcup \limits_{n=1}^{\infty} \Omega_{1,n}=\Omega_1$ and
  $\bigcup \limits_{n=1}^{\infty} \Omega_{2,n}=\Omega_2$; and
\item[(c)] $\Omega_{1,n} \cap \Omega_{2,n}\not=\emptyset$   for every $n=1,2, \ldots$.
\end{itemize}
It is even possible to choose the open sets $\Omega_{1,n}$ and $\Omega_{2,n}$
in such a way that they are bounded by finitely many analytic Jordan curves.
\end{lemma}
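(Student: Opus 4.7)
The plan is to build the exhaustions for $\Omega_1$ and $\Omega_2$ essentially independently, coupling them only through a fixed common base point. First I would pick $z_0 \in \Omega_1 \cap \Omega_2$ and $r_0 > 0$ with $\overline{D(z_0,r_0)} \subseteq \Omega_1 \cap \Omega_2$. For each $i \in \{1,2\}$ and $n \ge 1$ I would then form the compact set
$$K_{i,n} := \{z \in \Omega_i : |z| \le n \text{ and } \mathrm{dist}(z, \partial \Omega_i) \ge 1/n\} \cup \overline{D(z_0, r_0)}.$$
These sets increase and exhaust $\Omega_i$, and satisfy $K_{i,n} \subseteq \mathrm{int}(K_{i,n+1})$ once $n$ is large (from which point we start indexing). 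Forcing the fixed disk $\overline{D(z_0,r_0)}$ into every $K_{i,n}$ is what will secure the nonempty-intersection condition (c) later on.

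Next, for each $K_{i,n}$ I would produce an open set $\Omega_{i,n}$ bounded by finitely many smooth Jordan curves with $K_{i,n} \subseteq \Omega_{i,n}$ and $\overline{\Omega_{i,n}} \subseteq \mathrm{int}(K_{i,n+1})$. A convenient recipe is to cover $K_{i,n}$ by finitely many open disks whose closures lie in $\mathrm{int}(K_{i,n+1})$ (possible by compactness), take a smooth cut-off $\chi$ on $\C$ that is identically $1$ on $K_{i,n}$ and vanishes outside $\mathrm{int}(K_{i,n+1})$, and pick a regular value $c \in (0,1)$ of $\chi$ via Sard's theorem. Then $\{\chi > c\}$ is an open set bounded by finitely many $C^\infty$-smooth Jordan curves, situated as required. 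Condition (a) is then automatic from $\overline{\Omega_{i,n}} \subseteq \mathrm{int}(K_{i,n+1}) \subseteq \Omega_{i,n+1}$, and (b) follows from $\bigcup_n K_{i,n} = \Omega_i$.

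The main obstacle is the final clause of the lemma: upgrading the boundary from merely smooth to real analytic. For this I would invoke the classical fact that any $C^\infty$-smooth Jordan curve in $\C$ admits arbitrarily close real-analytic Jordan approximants in the $C^1$-topology, obtained for instance by truncating the Fourier series of an arclength parameterization (the truncations are trigonometric polynomials, hence real analytic, and $C^1$-closeness preserves injectivity and the nonvanishing of the derivative). Applying this componentwise to $\partial \Omega_{i,n}$ and pushing the analytic approximants slightly into the thin shell $\mathrm{int}(K_{i,n+1}) \setminus K_{i,n}$ yields a replacement $\Omega_{i,n}$ with the required nesting and analytic-Jordan-curve boundary. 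A more intrinsic alternative is to take $\Omega_{i,n}$ to be a sublevel set $\{G_{i,n} < c\}$ of the Green's function $G_{i,n}$ of a regular domain sandwiched between $K_{i,n}$ and $\mathrm{int}(K_{i,n+1})$, whose level curves at regular values $c$ are automatically real analytic. Either route completes the verification of (a)--(c) together with the analytic-boundary strengthening; the whole thing is essentially bookkeeping on top of the standard compact exhaustion, which is why the authors relegate it to a citation of Strebel.
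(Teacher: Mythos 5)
Your construction is correct, and it is worth noting that the paper does not actually prove Lemma \ref{lem:3} at all: it is stated as a ``well--known result'' with a citation to Strebel's book, where the exhaustion of an open Riemann surface by regions bounded by finitely many analytic Jordan curves is carried out. So your proposal supplies a self-contained argument where the authors rely on a reference. The two devices you add on top of the standard compact exhaustion are exactly the right ones: forcing a fixed closed disk $\overline{D(z_0,r_0)}\subseteq\Omega_1\cap\Omega_2$ into every $K_{i,n}$ secures (c) at no cost, and the cut-off-plus-Sard step produces the finitely many smooth Jordan boundary curves (finitely many components because finitely many disjoint Jordan curves split the sphere into finitely many regions). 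Two soft spots, neither fatal. First, the analytic upgrade via Fourier truncation does work, but you are implicitly using that embeddings of a compact $1$--manifold are open in the $C^1$ topology (so that the trigonometric-polynomial approximant is still injective) and that an $\eps$--perturbation with $\eps$ smaller than the mutual distances of the boundary curves, $\mathrm{dist}(K_{i,n},\partial\Omega_{i,n})$ and $\mathrm{dist}(\overline{\Omega_{i,n}},\partial\,\mathrm{int}(K_{i,n+1}))$ leaves the complementary regions in bijection with the old ones; this should be said. Second, the Green's function alternative is garbled as stated: you want \emph{super}level sets $\{G>c\}$ of a Green's function with pole inside $K_{i,n}$ (sublevel sets are neighbourhoods of the boundary), and since the sandwiched domain need not be connected you must work component by component, choosing a pole in each component that meets $K_{i,n}$; the level curves at noncritical values are then analytic because $G$ is harmonic off the pole and its critical points are isolated. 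With either route properly finished, (a)--(c) and the analytic-boundary clause all hold, and the bounded sets $\Omega_{i,n}$ are automatically hyperbolic, which is what the subsequent Lemmas \ref{lem:weak} and \ref{lem:2} need.
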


\begin{lemma} \label{lem:4}
Let $\Omega \subseteq \C$ be a hyperbolic open set and for each $n=1,2,\ldots$
let $\Omega_{n}\not=\emptyset$ be an open subset of $\Omega$ such that
$\Omega_n \subseteq \Omega_{n+1}$ for $n=1,2 \ldots$ and $\cup_{n=1}^{\infty} \Omega_n=\Omega$.
Then for each $z \in \Omega$, we have
$$ \lim \limits_{n \to \infty}\lambda_{\Omega_n}(z)=\lambda_{\Omega}(z) \,.$$
\end{lemma}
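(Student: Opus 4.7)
The plan is to reduce the claim to the convergence of the derivatives of normalized universal covering maps at a point. Fix $z_0 \in \Omega$; since $\Omega = \bigcup_n \Omega_n$ there is some $N$ with $z_0 \in \Omega_n$ for all $n \ge N$. Let $\Omega^0$ be the connected component of $\Omega$ containing $z_0$ and, for $n \ge N$, let $\Omega_n^0$ be the component of $\Omega_n$ containing $z_0$. Since $\hat{\C}\setminus \Omega^0 \supseteq \hat{\C}\setminus \Omega$ has at least three pairwise different points, and similarly for $\hat{\C}\setminus \Omega_n^0$, both $\Omega^0$ and $\Omega_n^0$ are hyperbolic domains, and moreover $\lambda_\Omega(z_0) = \lambda_{\Omega^0}(z_0)$, $\lambda_{\Omega_n}(z_0) = \lambda_{\Omega_n^0}(z_0)$. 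Let $\pi : \D \to \Omega^0$ and $\pi_n : \D \to \Omega_n^0$ be the universal covering maps normalized by $\pi(0) = \pi_n(0) = z_0$ and $\pi'(0),\pi_n'(0) > 0$. Since $\lambda_{\Omega^0}(z_0) = 2/\pi'(0)$ and $\lambda_{\Omega_n^0}(z_0) = 2/\pi_n'(0)$, the claim is equivalent to $\pi_n'(0) \to \pi'(0)$.

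The upper bound $\pi_n'(0) \le \pi'(0)$ is immediate from the inclusion $\Omega_n^0 \subseteq \Omega^0$ and the monotonicity statement in Lemma~\ref{lem:hypmetricprinciple}. For the matching lower bound, fix $r \in (0,1)$ and observe that the compact set $\pi(\overline{B(0,r)}) \subseteq \Omega^0 \subseteq \bigcup_n \Omega_n$ is covered by the increasing family $(\Omega_n)$, hence contained in $\Omega_n$ for every $n \ge n_0(r)$; being connected and containing $z_0$, it actually lies in $\Omega_n^0$. Because $B(0,r)$ is simply connected, lifting theory for the covering $\pi_n : \D \to \Omega_n^0$ produces a unique holomorphic $\phi_{n,r} : B(0,r) \to \D$ with $\phi_{n,r}(0) = 0$ and $\pi_n \circ \phi_{n,r} = \pi|_{B(0,r)}$. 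Differentiating at $0$ gives $\pi_n'(0)\,\phi_{n,r}'(0) = \pi'(0)$, so in particular $\phi_{n,r}'(0)$ is a positive real. Applying the Schwarz lemma to the self-map $w \mapsto \phi_{n,r}(rw)$ of $\D$ yields $r\,\phi_{n,r}'(0) \le 1$, that is, $\pi_n'(0) \ge r\,\pi'(0)$. Taking $\liminf$ in $n$ and then letting $r \uparrow 1$ gives $\liminf_n \pi_n'(0) \ge \pi'(0)$, which combined with the upper bound yields $\pi_n'(0) \to \pi'(0)$, as desired.

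The main obstacle is the lower bound: it requires producing, for each $r < 1$, a lift of $\pi|_{B(0,r)}$ through $\pi_n$ for all sufficiently large $n$, and this is precisely where the exhaustion hypothesis $\bigcup_n \Omega_n = \Omega$ plays its role, via the compactness of $\pi(\overline{B(0,r)})$. Once the lift exists, the comparison of normalized derivatives is a clean application of the Schwarz lemma. An alternative route would be to use the pointwise monotone limit $\mu(z) := \lim_n \lambda_{\Omega_n}(z)$ and verify, via interior elliptic estimates for the curvature equation $\Delta \log \lambda_{\Omega_n} = \lambda_{\Omega_n}^2$, that $\mu(z)\,|dz|$ is itself a conformal metric of curvature $-1$ on $\Omega$, and then invoke the maximality of the hyperbolic metric (Ahlfors' lemma) to obtain $\mu \le \lambda_\Omega$; but the covering-map argument seems more direct and stays closer in spirit to the Schwarz–Pick techniques already used in this paper.
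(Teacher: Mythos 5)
Your proof is correct, but it follows a genuinely different route from the paper. The paper's argument is very short: monotonicity gives a decreasing sequence $\lambda_{\Omega_n}(z)\ge\lambda_{\Omega_{n+1}}(z)\ge\lambda_{\Omega}(z)$, so the pointwise limit $\lambda$ exists and dominates $\lambda_{\Omega}$; a lemma of Heins (\cite[Lemma 11.1]{Hei62}) guarantees that $\lambda(z)\,|dz|$ is again a conformal metric of curvature $-1$ on $\Omega$, and maximality of the hyperbolic metric forces $\lambda=\lambda_{\Omega}$. You instead reduce to the components containing a fixed point $z_0$, pass to normalized universal covering maps $\pi,\pi_n$ of those components, and prove $\pi_n'(0)\to\pi'(0)$: the upper bound is monotonicity, and the lower bound $\pi_n'(0)\ge r\,\pi'(0)$ comes from lifting $\pi|_{B(0,r)}$ through $\pi_n$ (legitimate for large $n$ because the compact set $\pi(\overline{B(0,r)})$ is eventually swallowed by the increasing exhaustion and is connected through $z_0$) followed by the Schwarz lemma. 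All steps check out, including the normalization $\lambda_{\Omega^0}(z_0)=2/\pi'(0)$ for the curvature $-1$ metric and the identification of the hyperbolic density of an open set with that of the relevant component. Your argument is essentially the covering-map approach of Hejhal, which the paper explicitly mentions as an alternative source for this lemma (``Lemma \ref{lem:4} can also be deduced from the results of \cite{Hej74}''); it is more elementary in that it avoids Heins' convergence result for curvature $-1$ metrics, at the cost of invoking covering-space lifting theory, while the paper's route stays entirely inside the calculus of conformal metrics that the rest of the proof of Theorem \ref{thm:1} relies on.
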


\begin{proof}
The monotonicity property of the hyperbolic metric shows that $\lambda_{\Omega_n}(z)
\ge \lambda_{\Omega_{n+1}}(z) \ge \lambda_{\Omega}(z)$ for all $z \in \Omega$
and all (but finitely many)  positive integers $n$. Hence the limit
$$ \lambda(z):=\lim \limits_{n \to \infty} \lambda_n(z) $$
exists for every $z \in \Omega$ and $\lambda(z) \ge \lambda_{\Omega}(z)$ for
any $z \in \Omega$. By a result of Heins \cite[Lemma 11.1]{Hei62},
$ \lambda(z) \, |dz|$ is a conformal metric on $\Omega$ with constant
curvature $-1$. Since $\lambda_{\Omega}(z)\, |dz|$ is the maximal metric with
these properties, we deduce $\lambda(z)\,|dz|=\lambda_{\Omega}(z)\,|dz|$.
\end{proof}

Lemma \ref{lem:4} can also be deduced from the results of \cite{Hej74}.
We next prove Theorem \ref{thm:1} in a weak form.

\begin{lemma} \label{lem:weak}
Let $\Omega_1$ and $\Omega_2$ be open sets in $\C$ such that 
$\Omega_1 \cap \Omega_2 \not=\emptyset$ and $\Omega_1 \cup \Omega_2$ is  hyperbolic. Then
\begin{equation} \label{eq:estimateweak}
 \lambda_{\Omega_1}(z) \cdot \lambda_{\Omega_2}(z) \ge \frac{1}{\sqrt{2}}
 \cdot \lambda_{\Omega_1
  \cup \Omega_2}(z) \cdot \lambda_{\Omega_1 \cap \Omega_2}(z) 
\quad \text{ for all } z \in \Omega_1 \cap \Omega_2 \, .
\end{equation}
\end{lemma}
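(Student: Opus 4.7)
My plan is to imitate the proof of Ahlfors' classical lemma via the maximum principle, applied to a carefully chosen auxiliary function built by multiplying the four Poincar\'e densities. Specifically, I would consider
$$u(z) := \log \lambda_{\Omega_1 \cup \Omega_2}(z) + \log \lambda_{\Omega_1 \cap \Omega_2}(z) - \log \lambda_{\Omega_1}(z) - \log \lambda_{\Omega_2}(z)$$
on $\Omega_1 \cap \Omega_2$, aiming to show $u \le \tfrac{1}{2}\log 2$. By the curvature identity (\ref{eq:curvaturedef}) this satisfies
$$\Delta u = \lambda_{\Omega_1 \cup \Omega_2}^2 + \lambda_{\Omega_1 \cap \Omega_2}^2 - \lambda_{\Omega_1}^2 - \lambda_{\Omega_2}^2,$$
whose sign is not \emph{a priori} definite. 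This is where ``multiplying conformal metrics'' enters: combined with the monotonicity chain $\lambda_{\Omega_1 \cup \Omega_2} \le \lambda_{\Omega_j} \le \lambda_{\Omega_1 \cap \Omega_2}$ from Lemma \ref{lem:hypmetricprinciple}, any interior point at which $\Delta u \le 0$ automatically supplies a nontrivial bound on the product $\lambda_{\Omega_1 \cup \Omega_2}\lambda_{\Omega_1 \cap \Omega_2}/(\lambda_{\Omega_1}\lambda_{\Omega_2})$ by elementary calculus (maximize $\gamma \delta$ subject to $\gamma^2 + \delta^2 \le \alpha^2 + \beta^2$, $\gamma \le \min(\alpha,\beta)$, $\delta \ge \max(\alpha,\beta)$).

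To make the max principle applicable, I would first reduce to the case where $\Omega_1$ and $\Omega_2$ are smooth, using Lemma \ref{lem:3} for exhaustion by smoothly bounded sets and Lemma \ref{lem:4} to pass the inequality to the limit; in the smooth case, Lemma \ref{lem:ahl} controls the boundary behaviour of each density on those parts of $\partial(\Omega_1 \cap \Omega_2)$ lying in a single $\partial\Omega_j$, giving $u \to \log(\lambda_{\Omega_1\cup\Omega_2}/\lambda_{\Omega_{3-j}}) \le 0$ there. To deal with the noncompactness of $\Omega_1 \cap \Omega_2$ I would introduce the perturbation $u_\epsilon := u - \epsilon \log \lambda_{\Omega_1 \cap \Omega_2}$; since $\lambda_{\Omega_1 \cap \Omega_2} \to \infty$ on \emph{all} of $\partial(\Omega_1 \cap \Omega_2)$ by Lemma \ref{lem:hypmetricprinciple}, this forces $u_\epsilon \to -\infty$ on the boundary, and (once $u$ is known to be bounded above) $u_\epsilon$ attains its maximum at some interior $z_\epsilon$. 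Applying $\Delta u_\epsilon(z_\epsilon) \le 0$ together with the calculus step outlined above yields a bound of the form $\lambda_{\Omega_1 \cup \Omega_2}\lambda_{\Omega_1 \cap \Omega_2} \le \lambda_{\Omega_1}\lambda_{\Omega_2}/\sqrt{1-\epsilon}$ at $z_\epsilon$, and passing $\epsilon \to 0$ (using that $\lambda_{\Omega_1 \cap \Omega_2}$ is bounded below by a positive constant, since the Poincar\'e density of any hyperbolic open set attains its positive infimum at an interior point) propagates the bound to all of $\Omega_1 \cap \Omega_2$; the constant $\tfrac{1}{\sqrt{2}}$ emerges as a loose numerical outcome of this route.

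The main obstacle I foresee is verifying that $u$ is \emph{bounded above} on $\Omega_1 \cap \Omega_2$, which is needed to guarantee that $u_\epsilon$ actually attains its maximum in the interior. At smooth parts of $\partial(\Omega_1 \cap \Omega_2)$ this is immediate from Lemma \ref{lem:ahl}, but at ``corner'' points $\xi \in \partial \Omega_1 \cap \partial \Omega_2$ all four densities blow up at the same order and Lemma \ref{lem:ahl} does not directly apply; one must compare them with the explicit hyperbolic densities of the model sectors cut out by the tangent half-planes to $\partial \Omega_1$ and $\partial \Omega_2$ at $\xi$ and check that the resulting ratios are bounded uniformly in the local opening angle. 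It is precisely the crude nature of this corner estimate (rather than the interior calculus step, which in fact yields a sharper bound) that accounts for the nonsharp factor $\tfrac{1}{\sqrt{2}}$ in the statement of Lemma \ref{lem:weak}.
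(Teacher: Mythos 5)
Your interior ``calculus step'' is sound (and in fact yields the sharp constant $1$, not $\tfrac1{\sqrt2}$ --- it is essentially the computation the paper uses later for the sharp Lemma~\ref{lem:2}), but the boundary side of your argument has genuine gaps. First, the corner estimate you defer is the crux, not a technicality: near a point $\xi\in\partial\Omega_1\cap\partial\Omega_2$ the individual ratios such as $\lambda_{\Omega_1\cap\Omega_2}/\lambda_{\Omega_1}$ are \emph{unbounded} (approach the corner tangentially to the edge lying on $\partial\Omega_2$ and compare sector and half--plane densities), so boundedness of $u$ requires a precise cancellation inside the full product; nothing in your sketch produces it, nor does it produce the constant $\tfrac1{\sqrt2}$, which you merely assert will ``emerge.'' Second, your limiting step relies on the claim that the Poincar\'e density of any hyperbolic open set attains a positive infimum; this is false for unbounded sets (e.g.\ $\lambda_{\C\setminus\{0,1\}}(z)\to0$ as $z\to\infty$). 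For the same reason, when $\Omega_1\cap\Omega_2$ is unbounded the perturbation $u_\eps=u-\eps\log\lambda_{\Omega_1\cap\Omega_2}$ tends to $+\infty$, not $-\infty$, near the point at infinity, so the maximum of $u_\eps$ need not be attained and the term $\eps\log\lambda_{\Omega_1\cap\Omega_2}(z_\eps)$ need not vanish as $\eps\to0$.

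The paper's proof avoids all of this with one idea you are missing: instead of working with $u$ on all of $\Omega_1\cap\Omega_2$, take open sets $U_j$ compactly contained in $\Omega_j$ and form the \emph{auxiliary conformal metric} $\lambda=\lambda_{U_1}\lambda_{U_2}/\lambda_{U_1\cup U_2}$ on $U_1\cap U_2$. By the curvature equation and the monotonicity $\lambda_{U_1\cup U_2}\le\lambda_{U_j}$ this metric has curvature $\ge-2$, and it blows up everywhere on $\partial(U_1\cap U_2)$; comparing it with $\lambda_{\Omega_1\cap\Omega_2}$, which is \emph{bounded} on the compactly contained set $U_1\cap U_2$, the Ahlfors--type maximum--principle argument gives $\lambda_{\Omega_1\cap\Omega_2}\le\sqrt2\,\lambda$ there, and Lemmas~\ref{lem:3} and~\ref{lem:4} pass to the limit $U_j\uparrow\Omega_j$. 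The mismatch between the $U_j$'s and the ambient intersection is exactly what trivializes the boundary behaviour --- no smoothness, no Lemma~\ref{lem:ahl}, no corner analysis --- and the constant $\tfrac1{\sqrt2}$ is precisely the curvature constant $\sqrt{2}$ from Ahlfors' lemma, not the outcome of any boundary estimate.
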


\begin{proof}
Let $U_1$ and $U_2$ be open sets in $\C$ which are compactly contained
in $\Omega_1$ and $\Omega_2$ respectively such that $U_1 \cap U_2\not=\emptyset$. Consider the auxiliary metric
$$ \lambda(z) \, |dz|:=\frac{\lambda_{U_1}(z) \cdot
  \lambda_{U_2}(z)}{\lambda_{U_1 \cup U_2}(z)} \, |dz|$$
on $U_1 \cap U_2$. The curvature 
$$\kappa_{\lambda}(z)=-\frac{\Delta \log \lambda(z)}{\lambda(z)^2}$$
of this metric is
$$ \kappa_{\lambda}(z)=-\left(\frac{\lambda_{U_1 \cup
    U_2}(z)}{\lambda_{U_1}(z)}\right)^2-\left(\frac{\lambda_{U_1 \cup
    U_2}(z)}{\lambda_{U_2}(z)}\right)^2+ \left(\frac{\lambda_{U_1 \cup U_2}(z)^2}{\lambda_{U_1}(z) \cdot
  \lambda_{U_2}(z)}\right)^2 \, .  $$
Since
\begin{equation} \label{eq:haha}
\lambda_{U_1 \cup U_2}(z) \le \lambda_{U_j}(z) \, , \qquad j=1,2, \, 
\end{equation}
by Lemma \ref{lem:hypmetricprinciple}, we deduce $\kappa_{\lambda}(z) \ge -2$
for all $z \in U_1 \cap U_2$. Using again (\ref{eq:haha}) and Lemma
\ref{lem:hypmetricprinciple}, we also see that
$$ \lim \limits_{z \to \xi} \lambda(z)=+\infty \quad \text{ for every }
\xi \in \partial (U_1 \cap U_2) \, .$$
Since $U_1 \cap U_2$ is compactly contained in $\Omega_1 \cap \Omega_2$, the
 continuous function $\lambda_{\Omega_1 \cap \Omega_2} : \Omega_1 \cap
\Omega_2 \to \R$ is bounded on $U_1 \cap U_2$, so
 the function
$$ u(z):=\log \left(\frac{\lambda_{\Omega_1 \cap \Omega_2}(z)}{\sqrt{2} \cdot \lambda(z)} \right) \, ,
\qquad z \in U_1 \cap U_2 \, ,$$
has a continuous extension to $\overline{U_1 \cap U_2}$  which vanishes on
$ \partial (U_1 \cap U_2)$. Now we assume that $u(z) >0$ for some $z \in U_1
\cap U_2$. Then $u$ attains its maximal value at some
point $z_0 \in U_1 \cap U_2$. This implies
\begin{eqnarray*}
0 & \ge & \Delta u(z_0)=\Delta \log \lambda_{\Omega_1 \cap \Omega_2}
(z_0)-\Delta \log \lambda(z_0)\ge  \lambda_{\Omega_1 \cap \Omega_2}(z_0)^2-2\,
\lambda(z_0)^2  \, ,
\end{eqnarray*}
so $\lambda_{\Omega_1 \cap \Omega_2}(z_0)/(\sqrt{2} \cdot \lambda(z_0)) \le
1$, that is, $u(z_0)\le 0$,
a contradiction. It follows that $u(z) \le 0$ for all $z \in U_1 \cap
U_2$, i.e., 
$$ \lambda_{U_1}(z) \cdot \lambda_{U_2}(z) \ge \frac{1}{\sqrt{2}} \cdot \lambda_{U_1
  \cup U_2}(z) \cdot \lambda_{\Omega_1 \cap \Omega_2}(z) \quad \text{ for all }
  z \in U_1 \cap U_2 \, .$$ 
This inequality holds for all open sets $U_1$ and $U_2$ which are compactly
contained in $\Omega_1$ resp.~$\Omega_2$. An application of Lemma \ref{lem:3} and Lemma
\ref{lem:4} therefore completes the proof of Lemma \ref{lem:weak}.
\end{proof}

We are now in a position to prove the inequality  of Theorem \ref{thm:1}
for the case  that $\Omega_1$ and $\Omega_2$ are bounded by finitey many
analytic arcs.

\begin{lemma} \label{lem:2}
Let $\Omega_1$ and $\Omega_2$ be   open sets in $\C$ such that 
$\Omega_1 \cap \Omega_2 \not=\emptyset$ and $\Omega_1 \cup \Omega_2$ is
hyperbolic.  Suppose that $\partial \Omega_1$ and $\partial \Omega_2$ consists
of finitely many analytic Jordan curves.
Then
$$ \lambda_{\Omega_1}(z) \cdot \lambda_{\Omega_2}(z) \ge \lambda_{\Omega_1
  \cup \Omega_2}(z) \cdot \lambda_{\Omega_1 \cap \Omega_2}(z) 
\quad \text{ for all } z \in \Omega_1 \cap \Omega_2 \, .$$
\end{lemma}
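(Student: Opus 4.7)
My plan is to adapt the proof of Lemma \ref{lem:weak}, replacing the compactly contained pair $(U_1,U_2)$ by the smooth open sets $(\Omega_1,\Omega_2)$ themselves. This substitution improves the curvature lower bound on the auxiliary conformal metric from $-2$ to the sharp value $-1$; the price is that the boundary behaviour of the associated subharmonic function has to be controlled explicitly, which is exactly what the smoothness hypothesis is for.

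Put
$$\lambda(z)\,|dz|:=\frac{\lambda_{\Omega_1}(z)\,\lambda_{\Omega_2}(z)}{\lambda_{\Omega_1\cup\Omega_2}(z)}\,|dz|
\qquad\text{on } \Omega_1\cap\Omega_2,$$
and write $a:=\lambda_{\Omega_1\cup\Omega_2}/\lambda_{\Omega_1}$ and $b:=\lambda_{\Omega_1\cup\Omega_2}/\lambda_{\Omega_2}$. A direct calculation with (\ref{eq:curvaturedef}) gives
$$\kappa_\lambda+1\;=\;(1-a^2)(1-b^2)\;\ge\;0$$
by the monotonicity Lemma \ref{lem:hypmetricprinciple}, so $\kappa_\lambda\ge -1$. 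For $u:=\log(\lambda_{\Omega_1\cap\Omega_2}/\lambda)$, equation (\ref{eq:curvaturedef}) then yields
$$\Delta u\;=\;\lambda_{\Omega_1\cap\Omega_2}^2+\kappa_\lambda\,\lambda^2\;\ge\;\lambda_{\Omega_1\cap\Omega_2}^2-\lambda^2,$$
which is strictly positive wherever $u>0$. Thus $u$ is subharmonic on the open set $\{u>0\}$, and consequently $w:=\max(u,0)$ is subharmonic on the whole of $\Omega_1\cap\Omega_2$ (at points where $u\le 0$ the value $w=0$ trivially has the sub-mean-value property since $w\ge 0$ nearby).

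Next I would analyse the boundary behaviour of $w$. At $\xi\in\partial\Omega_1\cap\Omega_2$ the openness of $\Omega_2$ forces $\Omega_1$ and $\Omega_1\cap\Omega_2$ to coincide in a Euclidean neighbourhood of $\xi$, which is a smooth point of their common boundary there. Lemma \ref{lem:ahl} therefore gives $\lambda_{\Omega_1}/\lambda_{\Omega_1\cap\Omega_2}\to 1$, while $\lambda_{\Omega_1\cup\Omega_2}(\xi)/\lambda_{\Omega_2}(\xi)\le 1$ by monotonicity (since $\Omega_2\subseteq\Omega_1\cup\Omega_2$); hence $\lim_{z\to\xi}u(z)\le 0$, and a symmetric argument covers $\xi\in\Omega_1\cap\partial\Omega_2$. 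If $\xi$ lies on an arc along which $\partial\Omega_1$ and $\partial\Omega_2$ coincide, the same Lemma \ref{lem:ahl} argument, applied to each of the inclusions $\Omega_1\cap\Omega_2\subseteq\Omega_j\subseteq\Omega_1\cup\Omega_2$ $(j=1,2)$, forces $\lim u(z)=0$. The only remaining boundary points are isolated ``corners'' where $\partial\Omega_1$ and $\partial\Omega_2$ cross transversally (or tangentially); since each $\partial\Omega_i$ is a finite union of analytic Jordan curves, this exceptional set $E$ is finite, and in particular polar.

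Finally, Lemma \ref{lem:weak} supplies the global bound $w\le\log\sqrt 2$, and the generalised maximum principle for subharmonic functions with a polar exceptional boundary set (see e.g.~\cite{Ransford}) forces $w\le 0$ on $\Omega_1\cap\Omega_2$, which is precisely the desired pointwise inequality. (If $\Omega_1\cap\Omega_2$ is unbounded, one either includes $\infty$ in $E$ or else exhausts by bounded subdomains and invokes Lemma \ref{lem:4}.) The main obstacle in this plan is clearly the corner analysis: Lemma \ref{lem:ahl} is unavailable at a transverse intersection of the two analytic boundaries, and a direct asymptotic computation in a local sector model would be tedious. The device of passing to $w=\max(u,0)$ together with the crude but global bound of Lemma \ref{lem:weak} is what converts the finitely many corners into removable polar exceptional points for the maximum principle.
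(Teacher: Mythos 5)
Your proposal is correct and follows essentially the same route as the paper's own proof: the same auxiliary function $\log^+\bigl(\lambda_{\Omega_1\cap\Omega_2}\,\lambda_{\Omega_1\cup\Omega_2}/(\lambda_{\Omega_1}\lambda_{\Omega_2})\bigr)$, subharmonicity on the set where it is positive, Lemma \ref{lem:ahl} at all but finitely many boundary points, the global bound $\log\sqrt{2}$ from Lemma \ref{lem:weak}, and the extended maximum principle with a finite (hence polar) exceptional set. The one notable difference is your verification of subharmonicity via the factorization $\kappa_\lambda+1=(1-a^2)(1-b^2)\ge 0$, which is a cleaner route to the same inequality $\Delta u=\lambda_{\Omega_1\cap\Omega_2}^2+\lambda_{\Omega_1\cup\Omega_2}^2-\lambda_{\Omega_1}^2-\lambda_{\Omega_2}^2\ge 0$ than the paper's case analysis with differences of squares; your explicit treatment of arcs shared by $\partial\Omega_1$ and $\partial\Omega_2$ is also slightly more careful than the paper's bare appeal to the finiteness of intersections of analytic Jordan curves.
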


\begin{proof}
We consider the  auxiliary function
$$ u(z):=\log^+ \left( \frac{\lambda_{\Omega_1 \cap \Omega_2}(z)
  \cdot \lambda_{\Omega_1\cup\Omega_2}(z)}{\lambda_{\Omega_1}(z)
  \cdot \lambda_{\Omega_2}(z)} \right) \, , \qquad  z \in \Omega_1 \cap
\Omega_2 \, .$$
Here, $\log^+ x:=\max\{0,\log x\}$ for every positive real number $x$, so by
definition, 
$u$ is non--negative. We need to show $u(z)\equiv 0$. 

\medskip

(i) \, 
We first prove that $u$ is subharmonic
on $\Omega_1 \cap \Omega_2$. For this purpose we  note that the monotonicity principle of the hyperbolic metric (Lemma
\ref{lem:hypmetricprinciple}) shows that
\begin{equation} \label{eq:1a}
\lambda_{\Omega_1} (z) \ge \lambda_{\Omega_1 \cup \Omega_2}(z) \, , \qquad
\lambda_{\Omega_2}(z) \ge \lambda_{\Omega_1 \cup \Omega_2}(z) \text{ for all }
z \in \Omega_1 \cap \Omega_2 \, .
\end{equation}
Now fix a point $z_0 \in \Omega_1 \cap \Omega_2$ such that $u(z_0)>0$. Then we can
compute $\Delta u(z_0)$ using the curvature equation (\ref{eq:curvaturedef}). This gives us 
\begin{equation} \label{eq:2a}
\begin{array}{rcl}
 \Delta u(z_0)&=& \lambda_{\Omega_1 \cap \Omega_2}(z_0)^2
 + \lambda_{\Omega_1\cup\Omega_2}(z_0)^2 -\left( 
\lambda_{\Omega_1}(z_0)^2+\lambda_{\Omega_2}(z_0)^2 \right)
\\[2mm]
&=& \big(  \lambda_{\Omega_1 \cap \Omega_2}(z_0) -  \lambda_{\Omega_1\cup\Omega_2}(z_0) \big)^2- \big(\lambda_{\Omega_1}(z_0)-\lambda_{\Omega_2}(z_0)
  \big)^2
\\[2mm] && -2 \big( \lambda_{\Omega_1}(z_0)\lambda_{\Omega_2}(z_0)
-\lambda_{\Omega_1 \cap \Omega_2}(z_0)   \lambda_{\Omega_1\cup\Omega_2}(z_0)
\big) \, .
\end{array}
\end{equation}
Now observe that $u(z_0)>0$ is the same as $\lambda_{\Omega_1}(z_0)\lambda_{\Omega_2}(z_0)
-\lambda_{\Omega_1 \cap \Omega_2}(z_0)
\lambda_{\Omega_1\cup\Omega_2}(z_0)<0$, so
$$  \Delta u(z_0)
> \big(  \lambda_{\Omega_1 \cap \Omega_2}(z_0) -  \lambda_{\Omega_1\cup\Omega_2}(z_0) \big)^2- \big(\lambda_{\Omega_1}(z_0)-\lambda_{\Omega_2}(z_0)
  \big)^2 \, .$$
We claim that $\Delta u(z_0) \ge 0$. In fact, if $ \Delta u(z_0) < 0$, then
$$  \big( \lambda_{\Omega_1}(z_0)-\lambda_{\Omega_2}(z_0) \big)^2>  \big(
  \lambda_{\Omega_1 \cap \Omega_2}(z_0) -  \lambda_{\Omega_1\cup\Omega_2}(z_0)
\big)^2 \, .$$
Since $\lambda_{\Omega_1\cap \Omega_2}(z_0) \ge \lambda_{\Omega_1 \cup
  \Omega_2}(z_0)$ by Lemma \ref{lem:hypmetricprinciple} and since we may assume  $\lambda_{\Omega_1}(z_0) \ge
\lambda_{\Omega_2}(z_0)$,  we get 
$$ \lambda_{\Omega_1}(z_0)-\lambda_{\Omega_2}(z_0) > \lambda_{\Omega_1 \cap
  \Omega_2}(z_0) -  \lambda_{\Omega_1\cup\Omega_2}(z_0) \, .$$
This however  contradicts the montonicity property of the hyperbolic metric, which in
particular says that $\lambda_{\Omega_1}(z_0) \le \lambda_{\Omega_1 \cap
  \Omega_2}(z_0)$ and $\lambda_{\Omega_1 \cup \Omega_2}(z_0) \le \lambda_{\Omega_2}(z_0)$.
We have therefore shown that $\Delta u(z_0) \ge 0$ for every $z_0 \in \Omega_1\cap
\Omega_2$ such that $u(z_0)>0$. But since $u$ is non--negative by definition, this
easily implies that $u$ is subharmonic on $\Omega_1 \cap \Omega_2$.

\medskip

(ii) \, We now examine the boundary behaviour of the auxiliary function $u$
and  fix a point $\xi \in \partial (\Omega_1 \cap \Omega_2)$. Then $\xi
\in \partial \Omega_1 \cup \partial \Omega_2$. Let us  consider the case $\xi
\in\partial \Omega_1 \backslash \partial \Omega_2$. Then we can apply
 Lemma \ref{lem:ahl}, and this gives 
\begin{equation} \label{eq:2}
 \lim \limits_{z \to \xi}
\frac{\lambda_{\Omega_1}(z)}{\lambda_{\Omega_1 \cap \Omega_2}(z)}=1 \, .
\end{equation}
Hence, using the second inequality of (\ref{eq:1a}), we get that
$$ \liminf \limits_{z \to \xi}
\frac{\lambda_{\Omega_1}(z) \cdot \lambda_{\Omega_2}(z)}{\lambda_{\Omega_1 \cap
    \Omega_2}(z) \cdot \lambda_{\Omega_1 \cup \Omega_2}(z)} \ge 1 \, .$$
We have proved this inequality if $\xi \in \partial (\Omega_1 \cap \Omega_2)$
belongs to $\partial \Omega_1\backslash \partial \Omega_2$. Switching the roles of $\Omega_1$ and
$\Omega_2$, we see that this estimate also holds if $\xi \in \partial
\Omega_2\backslash \partial \Omega_1$, i.e.~it holds for every $\xi
\in \partial (\Omega_1 \cap \Omega_2)$ such that $\xi \not\in \partial \Omega_1
\cap \partial \Omega_2$.
This means that the auxiliary function $u$
has the property that
$$ \limsup \limits_{z \to \xi} u(z) = 0 \text{ for every } \xi \in \partial
(\Omega_1 \cap \Omega_2)\backslash (\partial \Omega_1 \cap \partial \Omega_2) \, .$$
Since  analytic Jordan curves can only
intersect at finitely many points, we deduce that
$$ 
\limsup \limits_{z \to \xi} u(z) = 0 \quad \text{ for all but finitely many } \xi \in \partial
(\Omega_1 \cap \Omega_2) \, .
$$ 

(iii) \, Finally we note that the auxiliary function $u$ is bounded above by
$\log \sqrt{2}$ (Lemma \ref{lem:weak}). Therefore we are in a position to apply the extended maximum
principle for subharmonic functions (see \cite[Theorem 3.6.9]{Ransford}), and this
implies that $u(z) \le 0$ for all $z \in \Omega_1 \cap \Omega_2$.
\end{proof}

\begin{proof}[Proof of Theorem \ref{thm:1}]
The estimate (\ref{eq:estimate}) of Theorem \ref{thm:1} follows immediately from Lemma \ref{lem:2},
Lemma \ref{lem:3} and Lemma \ref{lem:4}, so it remains to deal with the case
of equality. We consider the function
$$ u(z):=\log  \left( \frac{\lambda_{\Omega_1 \cap \Omega_2}(z) \cdot
    \lambda_{\Omega_1 \cup \Omega_2}(z)}{\lambda_{\Omega_1}(z) \cdot
    \lambda_{\Omega_2}(z) } \right) \, , \qquad z \in \Omega_1 \cap \Omega_2
\, .$$
Note that we have already proven that $u(z) \le 0$ in $\Omega_1 \cap
\Omega_2$. As in the proof of Lemma \ref{lem:2}, we have
\begin{equation} \label{eq:equal1}
\begin{array}{rcl}
\Delta u(z) &=& \big(
  \lambda_{\Omega_1 \cap \Omega_2}(z)-\lambda_{\Omega_1 \cup \Omega_2}(z)
\big)^2-\big( \lambda_{\Omega_1}(z)-\lambda_{\Omega_2}(z) \big)^2\\[2mm] & & \quad   +2
\big( \lambda_{\Omega_1\cap \Omega_2}(z) \lambda_{\Omega_1 \cup
    \Omega_2}(z)-\lambda_{\Omega_1}(z) \lambda_{\Omega_2}(z)\big) \, .
\end{array}
\end{equation}

Now, Lemma \ref{lem:hypmetricprinciple} implies $\lambda_{\Omega_1 \cap
  \Omega_2}(z) \ge \lambda_{\Omega_j}(z) \ge \lambda_{\Omega_1 \cup
  \Omega_2}(z)$ for $j=1,2$, so
$$ \lambda_{\Omega_1 \cap \Omega_2}(z)-\lambda_{\Omega_1 \cup \Omega_2}(z) \ge
|\lambda_{\Omega_1}(z)-\lambda_{\Omega_2}(z)| \, .$$
Inserting this into (\ref{eq:equal1}), we get
\begin{equation} \label{eq:equal2}
 \Delta u(z) \ge 2 \big( \lambda_{\Omega_1 \cap \Omega_2}(z) \lambda_{\Omega_1 \cup
    \Omega_2}(z)-\lambda_{\Omega_1}(z) \lambda_{\Omega_2}(z)\big) \, .\end{equation}
Applying the elementary inequality
$$ a \log \frac{b}{a} \le b-a \text{ for all } a,b \in \R, \, a \ge b >0 $$
for $a=\lambda_{\Omega_1}(z) \lambda_{\Omega_2}(z)$ and $b=\lambda_{\Omega_1
  \cap \Omega_2}(z) \cdot \lambda_{\Omega_1 \cup \Omega_2}(z)$, we deduce from
(\ref{eq:equal2}) that
$$ \Delta u(z) \ge 2 \lambda_{\Omega_1}(z) \lambda_{\Omega_2}(z) \cdot u(z) \,
, \qquad z \in \Omega_1 \cap \Omega_2 \, .$$
Hence, the strong maximum principle of Hopf (see \cite[Thm~2.1.2]{Jo}) implies
that in each connected component of $\Omega_1 \cap \Omega_2$
either $u=0$ or $u<0$. Therefore, if $z_0 \in \Omega_1 \cap \Omega_2$ is a
point such that equality holds in (\ref{eq:estimate}),
 then equality holds in
(\ref{eq:estimate}) for all $z$ in  $(\Omega_1 \cap \Omega_2)(z_0)$, the
component of $\Omega_1 \cap \Omega_2$ which contains $z_0$. In view of (\ref{eq:equal1})
this implies
\begin{equation} \label{eq:equal3}
 \big(
  \lambda_{\Omega_1 \cap \Omega_2}(z)-\lambda_{\Omega_1 \cup \Omega_2}(z)
\big)^2=\big( \lambda_{\Omega_1}(z)-\lambda_{\Omega_2}(z) \big)^2 \, , \qquad
z \in (\Omega_1 \cap \Omega_2)(z_0) \, .
\end{equation}
We need to show that $\Omega_1(z_0) \subseteq \Omega_2(z_0)$ or $\Omega_2(z_0)
\subseteq \Omega_1(z_0)$. This is obviously true if $(\Omega_1
\cap\Omega_2)(z_0)=(\Omega_1 \cup \Omega_2)(z_0)$, so we need to analyze the
case  $(\Omega_1 \cap \Omega_2(z_0) \subsetneq
(\Omega_1\cup \Omega_2)(z_0)$. 
Since the  hyperbolic metric is \textit{strictly}
decreasing with expanding domains (see \cite[p.~683]{HK}), we have
 $\lambda_{\Omega_1 \cap \Omega_2}> \lambda_{\Omega_1 \cup \Omega_2}$ in
 $(\Omega_1 \cap \Omega_2)(z_0)$
and therefore the identity
(\ref{eq:equal3}) and the continuity of
$\lambda_{\Omega_1}-\lambda_{\Omega_2}$  imply that there either $\lambda_{\Omega_1}>\lambda_{\Omega_2}$
or $\lambda_{\Omega_1}<\lambda_{\Omega_2}$. In the first case, we can deduce
from (\ref{eq:equal3}) that
\begin{equation} \label{eq:equal4}
\lambda_{\Omega_1 \cap \Omega_2}(z)-\lambda_{\Omega_1 \cup \Omega_2}(z)
=\lambda_{\Omega_1}(z)-\lambda_{\Omega_2}(z)  \,  , \quad z \in (\Omega_1
\cap \Omega_2)(z_0)\, .
\end{equation}
This shows  $(\Omega_1 \cap \Omega_2)(z_0)=\Omega_1(z_0)$, because otherwise
$\lambda_{\Omega_1 \cap \Omega_2}(z)>\lambda_{\Omega_1}(z)$, i.e.,
$\lambda_{\Omega_1 \cup \Omega_2}(z) =\lambda_{\Omega_1 \cap
  \Omega_2}(z)-\lambda_{\Omega_1}(z)+\lambda_{\Omega_2}(z)>\lambda_{\Omega_2}(z)$,
a contradiction. Hence we get $\lambda_{\Omega_1 \cap
\Omega_2}(z)=\lambda_{\Omega_1}(z)$ for every $z \in (\Omega_1 \cap
\Omega_2)(z_0)$. Therefore, (\ref{eq:equal4}) shows $\lambda_{\Omega_1 \cup
  \Omega_2}=\lambda_{\Omega_2}$  on $(\Omega_1 \cap \Omega_2)(z_0)$, so
$(\Omega_1 \cup \Omega_2)(z_0)=\Omega_2(z_0)$. Putting all this together gives
$\Omega_1(z_0)=(\Omega_1 \cap \Omega_2)(z_0) \subsetneq (\Omega_1 \cup \Omega_2)(z_0)=\Omega_2(z_0)$.
\end{proof}

\begin{remark}
The approximation technique by smooth open sets, which has been used to prove
Theorem \ref{thm:1}, cannot be avoided entirely. This follows from the fact
that if e.g.~$\Omega_1$ is a non--smooth open set, then 
the crucial limit relation (\ref{eq:2}), which in turn is based on Lemma \ref{lem:ahl}, is no longer valid. To see this, take
$\Omega_1:=\C \backslash \{-1,1\}$ and
$\Omega_2:=\D$, so $\Omega_1 \cap \Omega_2=\D$ and
$$ \lambda_{\Omega_1 \cap \Omega_2}(z)=\lambda_{\D}(z)=\frac{2}{1-|z|^2} \, ,$$
On the other hand, it is known (see \cite{Min}) that
$$ \lim \limits_{z \to 1} \lambda_{\C \backslash \{-1,1}(z) |z-1| \log \left(
  \frac{1}{|z-1|} \right)=1 \, .$$
Hence,
$$ \lim \limits_{z \to 1}  \frac{\lambda_{\Omega_1}(z)}{\lambda_{\Omega_1 \cap
    \Omega_2}(z)}=\lim \limits_{z \to 1} \frac{\lambda_{\C \backslash
    \{-1,1\}}(z)}{\lambda_{\D}(z)}= 0 \, .
$$
It would be interesting to see whether the regularity conditions imposed on the boundary
set $\Gamma$ in Lemma \ref{lem:ahl} can be considerably weakend.
\end{remark}

\section{Concluding remark}

We conclude this paper by noting that 
Theorem \ref{thm} and Theorem \ref{thm:1} remain valid, for  subdomains (or
open subsets) of a Riemann surface since it  
makes sense to speak of the value of the \textit{quotient} of two 
conformal metrics at a specific point on a Riemann surface,
see again \cite{Min}. Hence the ``Riemann surface version'' of Theorem \ref{thm}
takes the following form.

\begin{theorem}
Let $R$ be a Riemann surface and let  $\Omega_1,\Omega_2\subseteq R$ be domains  such that 
$\Omega_1 \cap \Omega_2 \not=\emptyset$ and $\Omega_1 \cup \Omega_2$ is  hyperbolic. Then
$$ \frac{\lambda_{\Omega_1}(z) \cdot \lambda_{\Omega_2}(z)}{\lambda_{\Omega_1
  \cup \Omega_2}(z) \cdot \lambda_{\Omega_1 \cap \Omega_2}(z) } \ge 1
\quad \text{ for all } z \in \Omega_1 \cap \Omega_2 \, .$$
If equality holds for one point  $z \in\Omega_1 \cap \Omega_2$, then
 $\Omega_1\subseteq \Omega_2$ or $\Omega_2 \subseteq \Omega_1$. In this case, equality
 holds for all points in $\Omega_1 \cap \Omega_2$.
\end{theorem}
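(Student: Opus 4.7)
The plan is to transfer the proof of Theorem \ref{thm:1} verbatim to the Riemann surface setting, exploiting the fact that every step in that argument is intrinsically local. The crucial point is that although the density $\lambda_\Omega$ is not a function on $R$ (it transforms as a $(1,1)$-form), the curvature equation $\Delta \log \lambda_\Omega = \lambda_\Omega^2$, the quotients $\lambda_{\Omega}/\lambda_{\Omega'}$, and the products $\lambda_{\Omega_1}\lambda_{\Omega_2}/(\lambda_{\Omega_1\cap\Omega_2}\lambda_{\Omega_1\cup\Omega_2})$ all make invariant sense on $R$: the first in any local chart (both sides transform the same way under a change of coordinate), and the latter two as honest global functions. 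Consequently, every inequality and every computation of a Laplacian performed in the planar proof descends to a coordinate-independent statement on $R$.

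First I would reprove the monotonicity Lemma \ref{lem:hypmetricprinciple} on $R$ (this is just Ahlfors' lemma applied on each chart), and then establish the Riemann surface analog of Lemma \ref{lem:ahl}, the boundary Ahlfors lemma: since smoothness of a boundary arc is a local notion on $R$, the proof works by passing to a holomorphic chart near $\xi\in\Gamma$ and appealing to Theorem 5.1 of \cite{KRR06}. I would also need the exhaustion Lemma \ref{lem:3} on $R$, which is classical: any open $\Omega\subseteq R$ can be exhausted by relatively compact open subsets bounded by finitely many analytic Jordan curves (use, e.g., sublevel sets of a smooth exhaustion function made transverse to its level sets, or a standard modification of Stoilow's construction). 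The monotone convergence Lemma \ref{lem:4} likewise carries over with the same proof, using Heins' theorem \cite[Lemma 11.1]{Hei62} which is already stated for Riemann surfaces.

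With these ingredients in place, the weak estimate (Lemma \ref{lem:weak}) and then the sharp estimate (Lemma \ref{lem:2}) are derived exactly as in the planar case. Pick a point $z_0$ and work in a chart around it: the auxiliary function
$$u(z) := \log^+\!\left(\frac{\lambda_{\Omega_1\cap\Omega_2}(z)\,\lambda_{\Omega_1\cup\Omega_2}(z)}{\lambda_{\Omega_1}(z)\,\lambda_{\Omega_2}(z)}\right)$$
is a bona fide function on $\Omega_1\cap\Omega_2\subseteq R$ (independent of chart), the Laplacian computation goes through in any local coordinate because of how $\Delta \log \lambda_\Omega$ transforms, and the boundary behavior at points of $\partial\Omega_1\setminus\partial\Omega_2$ (resp.\ $\partial\Omega_2\setminus\partial\Omega_1$) is controlled by the Riemann surface version of Lemma \ref{lem:ahl}. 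Applying the extended maximum principle for subharmonic functions on the Riemann surface subdomain $\Omega_1\cap\Omega_2$, which is available in exactly the same form \cite[Theorem 3.6.9]{Ransford} since subharmonicity is a local notion, yields the sharp inequality first for smooth $\Omega_1,\Omega_2$ and then for arbitrary ones by the exhaustion argument.

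The equality case is handled exactly as in the planar proof: form the signed log-ratio $u$, verify that $\Delta u \ge 2\lambda_{\Omega_1}\lambda_{\Omega_2}\,u$ in local coordinates using the same elementary inequality $a\log(b/a)\le b-a$, and invoke Hopf's strong maximum principle \cite[Thm~2.1.2]{Jo} (applied in any chart, since the conclusion $u\equiv 0$ on a component propagates by connectedness). The remaining containment argument uses the strict monotonicity of the hyperbolic metric under strict inclusion, which is a local assertion and holds on Riemann surfaces. The main obstacle I anticipate is ensuring that the smooth exhaustion of Lemma \ref{lem:3} can be performed on an arbitrary Riemann surface while preserving the non-emptiness of the pairwise intersections; this requires a little care (for instance, first fixing a point $p\in\Omega_1\cap\Omega_2$ and demanding that $\Omega_{j,n}$ contain a fixed small smooth neighborhood of $p$), but presents no essential difficulty.
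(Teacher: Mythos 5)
Your proposal is correct and follows exactly the route the paper intends: the paper's own ``proof'' of this theorem is the single remark that the argument for Theorem \ref{thm:1} carries over with obvious modifications, and your write-up simply makes those modifications explicit (chart-independence of the metric quotients and of $\Delta\log\lambda_\Omega$, locality of the boundary Ahlfors lemma and of the maximum principles, and the smooth exhaustion on a Riemann surface). Nothing in your elaboration deviates from or adds to the paper's approach in an essential way.
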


The proof is almost identical to the proof of Theorem \ref{thm} with obvious
modifications.

\vfill
Daniela Kraus {\small (dakraus@mathematik.uni-wuerzburg.de)}\\
Oliver Roth {\small (roth@mathematik.uni-wuerzburg.de)}\\
Department of Mathematics\\
University of W\"urzburg\\
Emil Fischer Stra{\ss}e 40\\
97074 W\"urzburg\\
Germany


\begin{thebibliography}{99}

\bibitem{Ag68}
S.~Agard, 
Distortion theorems for quasiconformal mappings,
\textit{Ann.~Acad.~Sci. Fenn.~Ser.~A I} \textbf{(413)}, 12~ pp (1968).

\bibitem{Ahl38} L.~Ahlfors,  An extension of Schwarz's
lemma, {\it Trans.~Amer.~Math.~Soc}.~{\bf 42} (1938), 359--364.



\bibitem{BS2013} A.~Baernstein II and A.~Yu.~Solynin, Monotonicity and
  comparison results for conformal invariants,
  \textit{Rev.~Mat.~Iberoam}.~(2013), \textbf{29} No.~1, 91-113.

\bibitem{BP78}
A.~F.~Beardon, Ch.~Pommerenke, The Poincar\'e metric of plane domains,
\textit{J.~London Math.~Soc}.~\textbf{18} (2), 475–-483 (1978).

\bibitem{Bet08}
D.~Betsakos, 
Estimation of the hyperbolic metric by using the punctured plane,
\textit{Math.~Z}.~\textbf{259} (2008), no.~1, 187–-196. 

\bibitem{Choquet} G.~Choquet, Theory of capacitities,
  \textit{Ann.~Inst.~Fourier Grenoble} {\bf 5}, 131--295 (1953--1954).

\bibitem{GL01} F.~P.~Gardiner, N.~Lakic, 
Comparing Poincar\'e densities, \textit{Ann.~Math}.~\textbf{154} (2),
245-–267 (2001).

\bibitem{HK} W.~K.~Hayman, \textit{Subharmonic functions, Vol.~2}, London
  Math.~Soc., 1989.

\bibitem{Hei62} M.~Heins, 
On a class of conformal metrics, {\it Nagoya Math.~J}.~(1962), {\bf 21},
    1--60.

\bibitem{Hej74}
D.~A.~Hejhal, Universal covering maps for variable regions,
\textit{Math.~ Z}.~\textbf{137}  (1974), 7–-20. 

\bibitem{Hem79}
J.~A.~Hempel, The Poincar\'e metric on the twice punctured plane and the
theorems of Landau and Schottky, \textit{J.~London Math.~Soc}.~\textbf{20} (2), 435–445 (1979).

\bibitem{Jo} J.~Jost, \textit{Partial Differential Equations}, Springer, 2012.

\bibitem{KRR06} D.~Kraus, O.~Roth and St.~Ruscheweyh,
A boundary version of Ahlfors' Lemma, locally complete  conformal metrics
and conformally invariant reflection  principles for analytic maps,
{\it J.~Anal.~Math.}~{\bf 101} (2007), 219--256.

\bibitem{KRS11} D.~Kraus, O.~Roth and T.~Sugawa,  
Metrics with conical singularities on the sphere and sharp extensions of the
theorems of Landau and Schottky, \textit{Math.~Z}. \textbf{267} (2011),
no. 3--4, 851–-868.

\bibitem{Min1982} D.~Minda, Bloch constants, \textit{ J.~Anal.~Math.}~{\bf 41} (1982), 54--84.

\bibitem{Min1987} D.~Minda,  The strong form of Ahlfors' lemma, {\it Rocky
    Mountain J.~Math}.  {\bf 17} no.~3 (1987), 457--461.

\bibitem{Min} D.~Minda, The density of the hyperbolic metric
near an isolated boundary point, \textit{Compl.~Var}.~{\bf 32} (1997), 331--340.

\bibitem{Pom} Ch.~Pommerenke, \textit{Boundary behaviour of conformal maps},
  Springer,1992.

\bibitem{Ransford} T.~J.~Ransford, {\it Potential theory in the complex
    plane}, Cambridge Univ.~Press, 1995.

\bibitem{Renggli} Renggli, 
An inequality for logarithmic capacities, \textit{Pacific J.~Math}.~{\bf 11} No.~1 (1961), 313-314. 


\bibitem{S1999} A.~Yu.~Solynin, Ordering of sets, hyperbolic metrics, and
  harmonic measures, \textit{J.~Math.~Sci}.~\textbf{95} no.~3, 2256--2266 (1999).

\bibitem{S2010} A.~Yu.~Solynin, Elliptic operators and Choquet capacities, \textit{J.~Math.~Sci}.~\textbf{166} no.~2, 210--213 (2010).



\bibitem{SV01}
A.~Yu.~Solynin, M.~Vuorinen, 
Estimates for the hyperbolic metric of the punctured plane and applications,
\textit{Israel J.~Math}.~\textbf{124}, 29–-60 (2001).

\bibitem{SV05}
T.~Sugawa, M.~Vuorinen, Some inequalities for the Poincar\'e metric of plane
domains, \textit{Math.~Z}.~\textbf{250}, 885–-906 (2005).

\bibitem{Str} K.~Strebel, \textit{Vorlesungen \"uber Riemannsche Fl\"achen},
  Vanden\-hoeck \& Rup\-recht, 1980.

\bibitem{Wei79}
A.~Weitsman, A symmetry property of the Poincar\'e metric, \textit{Bull.~London Math.~Soc}.~\textbf{11}, 295–-299 (1979).
\end{thebibliography}
\end{document}